\documentclass[a4paper,
	11pt,
	pdftex,
	headings=normal,
	headsepline,
	footsepline,
	onecolumn,
	headinclude,
	footinclude,
	DIV14,
	abstracton
]{scrartcl}

\usepackage{
	graphicx,
	amssymb,
	amsmath,
	amsthm,
	xcolor,
	dsfont,
	algpseudocode,
	authblk,
}

\usepackage[ngerman, english]{babel}

\usepackage[bf]{caption}
\captionsetup{format=plain}

\usepackage[colorlinks,
	pdffitwindow=false,
	plainpages=false,
	pdfpagelabels=true,
	pdfpagemode=UseOutlines,
	pdfpagelayout=SinglePage,
	bookmarks=false,
	colorlinks=true,
	hyperfootnotes=false,
	linkcolor=blue,
	citecolor=green!50!black
]{hyperref}

\usepackage{enumerate}
\usepackage{algorithm}
\usepackage{subcaption}    
\usepackage{tabularx}
\usepackage[left=2.50cm, right=2.50cm, top=2.00cm, bottom=3.50cm]{geometry}

\newtheorem{theorem}{Theorem}[section]

\newtheorem{definition}[theorem]{Definition}
\newtheorem{remark}[theorem]{Remark}
\newtheorem{example}[theorem]{Example}

\DeclareMathOperator*{\argmin}{arg\,min} 
\newcommand{\R}{\mathbb{R}}
\newcommand{\N}{\mathbb{N}}

\newcommand{\B}{\mathcal{B}}
\newcommand{\D}{\mathcal{D}}
\renewcommand{\L}{\mathcal{L}}
\newcommand{\M}{\mathcal{M}}
\newcommand{\F}{\mathcal{F}}

\newcommand{\vspan}{\text{span}}

\let\OLDthebibliography\thebibliography
\renewcommand\thebibliography[1]{
	\OLDthebibliography{#1}
	\setlength{\parskip}{0pt}
	\setlength{\itemsep}{0pt plus 0.3ex}
}


\begin{document}
	\title{Inverse multiobjective optimization: Inferring decision criteria from data}
	\author[]{Bennet Gebken}
	\author[]{Sebastian Peitz}
	\affil[]{\normalsize Department of Mathematics, Paderborn University, Germany}

	\maketitle

	\begin{abstract}
		It is a very challenging task to identify the objectives on which a certain decision was based, in particular if several, potentially conflicting criteria are equally important and a continuous set of optimal compromise decisions exists. This task can be understood as the inverse problem of multiobjective optimization, where the goal is to find the objective vector of a given Pareto set. To this end, we present a method to construct the objective vector of a multiobjective optimization problem (MOP) such that the Pareto critical set contains a given set of data points or decision vectors. The key idea is to consider the objective vector in the multiobjective KKT conditions as variable and then search for the objectives that minimize the Euclidean norm of the resulting system of equations. By expressing the objectives in a finite-dimensional basis, we transform this problem into a homogeneous, linear system of equations that can be solved efficiently. 
		There are many important potential applications of this approach. Besides the identification of objectives (both from clean and noisy data), the method can be used for the construction of surrogate models for expensive MOPs, which yields significant speed-ups. Both applications are illustrated using several examples.
	\end{abstract}

	\section{Introduction}
	When applying optimization to real-world problems, there are often multiple quantities that have to be optimized at the same time. In production for example, typical goals are the maximization of the quality of a product and the minimization of the production cost. When the objectives are conflicting, there cannot be a single solution that is optimal for all objectives at the same time. This is called a \emph{multiobjective optimization problem (MOP)}. To solve a problem like this, we search for the set of all optimal compromises, the so-called \emph{Pareto set}, containing all \emph{Pareto optimal points}. A point $x^*$ is called Pareto optimal if there exists no other point that is at least as good as $x^*$ in all objectives, but strictly better than $x^*$ in at least one objective.	
	
	While most of the research in multiobjective optimization is concerned with efficiently computing the Pareto set of a given MOP, we here address the \emph{inverse problem of multiobjective optimization}: 
	\begin{equation*}
		\text{Given a set $P \subseteq \R^n$, identify the objectives for which $P$ is the Pareto set.}
	\end{equation*}			
	Although it is possible to state this problem in such a general form, it will have many degenerate solutions that we are are not interested in, since there is no restriction on any type of regularity of the objective functions. Therefore, we will instead consider a more well-behaved version of this problem that arises by using the concept of \emph{Pareto criticality}. A point $x^* \in \R^n$ is called \emph{Pareto critical} if it satisfies the \emph{Karush-Kuhn-Tucker (KKT) conditions} \cite{KT1951}, i.e., if there is a convex combination of all the gradients of the objective functions $f_i \in C^1(\R^n,\R)$, $i \in \{ 1,...,k \}$, in $x^*$ which is zero. In that case, if $\alpha^* \in \R^k$ contains the coefficients of this convex combination, then $\alpha^*$ is called a \emph{KKT vector of} $x^*$ and the pair $(x^*,\alpha^*)$ is called an \emph{extended Pareto critical point}. The set of all such pairs is called the \emph{extended Pareto critical set}.  The above problem can be restated using this concept:
	\begin{equation}\label{eq:IMOP} \tag{IMOP}
		\begin{aligned} 
			&\text{Given a finite \emph{data set} $\D = (\D_x,\D_\alpha) \subseteq \R^n \times \R^k$, find an objective vector} \\
			&\text{$f \in C^1(\R^n,\R^k)$ whose extended Pareto critical set contains $\D$.}
		\end{aligned}
	\end{equation}	
	Since the search space $C^1(\R^n,\R^k)$ is infinite-dimensional, we will consider finite-dimensional subspaces of $C^1(\R^n,\R)$ that are spanned by sets of basis functions $\B \subseteq C^1(\R^n,\R)$. This will transform \eqref{eq:IMOP} into a homogeneous linear system in the coefficients of the basis functions which can be solved by singular value decomposition.
	
	For the single objective case, i.e., for $k = 1$, the problem \eqref{eq:IMOP} is addressed within the field of \emph{inverse optimization}. For combinatorial problems, a survey on inverse optimization can be found in \cite{H2004}. In \cite{HL1999} and \cite{AO2001}, inverse linear problems of the form $\min_x c^\top x$ (with some linear constraints) were considered, where the goal is to find the cost vector $c$ so that a given feasible point is optimal. In \cite{KWB2011}, convex parameter-dependent problems were considered with the intention of estimating the objective functions from observations of parameter values and associated optimal solutions. Recently, first results in the multiobjective case have appeared. In \cite{CCLS2014}, inverse linear multiobjective problems are treated similarly to \cite{KWB2011} by transforming the multiobjective problem into a scalar problem via the Weighting Method. In \cite{CL2017}, the ideas of \cite{KWB2011} and \cite{CCLS2014} are combined with the additional focus on preserving the trade-off in the given solution.
	
	In all previous approaches, certain properties have to be assumed for the objective functions such as linearity, convexity or even a parameter dependent formulation. In contrast to this, we will make no assumptions besides differentiability. Additionally, instead of single points, the approach can be applied to an arbitrary amount of data points. This will allow us to consider the inverse problem of multiobjective optimization in a much more general setting. 
	
	The remainder of this article is structured as follows. We begin with a brief introduction to multiobjective optimization in Section \ref{sec:introduction_MOP} before presenting our main theoretical results in Section \ref{sec:generating_mops_from_data}. There, we will first investigate the existence of an objective vector in the span of the chosen basis $\B$ for which the data points are extended Pareto critical. We then address the task of finding the objective vector whose extended Pareto critical set is as close to a given data set as possible. The application of the resulting algorithm to two important problem classes is presented in Sections \ref{sec:constructingObjectives} and \ref{sec:surrogates}. These are the construction of objective functions from both clean and noisy decision data as well as the generation of surrogate models for expensive MOPs. Finally, we draw a conclusion and discuss possible future work in Section \ref{sec:conclusion}.	
	
	For our numerical results, we use the built-in method \verb+svd+ from MATLAB 2017a for singular value decomposition. For the computation of the extended Pareto critical sets in this article, we use the Continuation Method \verb+CONT-Recover+ from \cite{SDD2005}.
	
	\section{Multiobjective optimization} \label{sec:introduction_MOP}
	In this section, we will briefly introduce the basic concepts of multiobjective optimization. For a more detailed introduction, we refer to \cite{M1998,E2005,H2001}.
	
	Let $f : \R^n \rightarrow \R^k$ be a vector-valued function, called the \emph{objective vector}, with continuously differentiable components $f_i : \R^n \rightarrow \R$, $i \in \{ 1,...,k \}$, called \emph{objective functions}. It maps the \emph{variable space} $\R^n$ to the \emph{image space} $\R^k$. The goal of multiobjective optimization is to minimize the objective vector $f$, i.e., to minimize all objective functions $f_i$ simultaneously. This is called a \emph{multiobjective optimization problem (MOP)} and is denoted by 
	\begin{equation} \label{eq:MOP}
		\min_{x \in \R^n} f(x) = \min_{x \in \R^n} 
		\begin{pmatrix}
			f_1(x) \\
			\vdots \\
			f_k(x)
		\end{pmatrix}. \tag{MOP}
	\end{equation}
	In contrast to scalar optimization (i.e., $k = 1$), it is not immediately clear what we mean by minimizing $f$, as there is no natural total order of the objective values in $\R^k$ for $k > 1$. As a result, we cannot expect to find a single point that solves \eqref{eq:MOP}. Instead, we search for the \emph{Pareto set} which is defined in the following way:
	\begin{definition}
		\begin{itemize}
			\item[(a)] $x^* \in \R^n$ \emph{dominates} $x \in \R^n$, if $f_i(x^*) \leq f_i(x)$ for all $i \in \{1,...,k\}$ and $f_j(x^*) < f_j(x)$ for some $j \in \{1,...,k\}$.
			\item[(b)] $x^* \in \R^n$ is called \emph{locally Pareto optimal} if there exists an open set $U \subseteq \R^n$ with $x^* \in U$ such that there is no point $x \in U$ dominating $x^*$. If this holds for $U = \R^n$, then $x^*$ is called \emph{Pareto optimal}.
			\item[(c)] The set of all (locally) Pareto optimal points is called the \emph{(local) Pareto set}, its image under $f$ the \emph{(local) Pareto front}.  
		\end{itemize}
	\end{definition}
	
	Similar to scalar optimization, there are necessary conditions for local Pareto optimality using the first order derivatives of $f$, called the \emph{Karush-Kuhn-Tucker (KKT)} conditions \cite{H2001}:

	\begin{theorem}
		Let $x^*$ be a locally Pareto optimal point of \eqref{eq:MOP} and
		\begin{equation} \label{eq:Delta_k}
			\Delta_k := \left\{ \alpha \in (\R^{\geq 0})^k : \sum_{i = 1}^k \alpha_i = 1 \right\}.
		\end{equation}
		Then there exists some $\alpha^* \in \Delta_k$ such that
		\begin{equation} \label{eq:KKT}
			Df(x^*)^\top \alpha^* = \sum_{i = 1}^k \alpha^*_i \nabla f_i(x^*) = 0. \tag{KKT}
		\end{equation}
	\end{theorem}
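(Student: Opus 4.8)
The plan is to derive the KKT conditions from a first-order analysis of local Pareto optimality combined with a separating-hyperplane argument. First I would translate local Pareto optimality into a statement about descent directions: I claim there is no direction $d \in \R^n$ with $\nabla f_i(x^*)^\top d < 0$ for all $i \in \{1,\dots,k\}$. Indeed, if such a common descent direction existed, then since each $f_i$ is continuously differentiable, a first-order Taylor expansion gives $f_i(x^* + t d) = f_i(x^*) + t\, \nabla f_i(x^*)^\top d + o(t) < f_i(x^*)$ for all sufficiently small $t > 0$ and all $i$. The point $x^* + td$ would then be strictly smaller than $x^*$ in every objective, hence it dominates $x^*$; and by taking $t$ small enough it lies in any prescribed neighborhood $U$ of $x^*$, contradicting local Pareto optimality.

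Next I would convert the absence of a common descent direction into the existence of the multiplier vector. Consider the convex hull $C := \text{conv}\{\nabla f_1(x^*), \dots, \nabla f_k(x^*)\} \subseteq \R^n$, which is compact and convex. The assertion of the theorem is exactly that $0 \in C$, since every element of $C$ has the form $\sum_{i=1}^k \alpha_i \nabla f_i(x^*)$ with $\alpha \in \Delta_k$. I would argue by contradiction: if $0 \notin C$, then since $C$ is compact and convex, the strict separating-hyperplane theorem yields a vector $d \in \R^n$ with $\langle d, g \rangle > 0$ for all $g \in C$, in particular $\nabla f_i(x^*)^\top d > 0$ for every $i$. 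But then $-d$ is a common descent direction, contradicting the first step. Hence $0 \in C$, which provides the required $\alpha^* \in \Delta_k$ with $\sum_{i=1}^k \alpha_i^* \nabla f_i(x^*) = Df(x^*)^\top \alpha^* = 0$.

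The main obstacle is the passage from local Pareto optimality to the KKT multiplier, which rests entirely on the separation argument (equivalently, on a theorem of the alternative such as Gordan's theorem); the differentiability-based first step is routine. In writing this carefully one must secure \emph{strict} separation, which is precisely why the compactness of the finite convex hull $C$ is invoked rather than a general closed-convex-set separation, and one must verify that the common descent direction is realized within an arbitrarily small neighborhood so that the contradiction applies to the specific set $U$ from the definition of local Pareto optimality. No convexity or special structure of the objectives is needed beyond $f \in C^1(\R^n,\R^k)$, consistent with the standing assumptions.
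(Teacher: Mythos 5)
Your proof is correct. Note that the paper does not prove this theorem at all---it is quoted as a standard result with a citation to the literature (\cite{H2001})---so there is no in-paper argument to compare against; your two-step derivation (no common descent direction at a locally Pareto optimal point via first-order Taylor expansion, then $0 \in \text{conv}\{\nabla f_1(x^*),\dots,\nabla f_k(x^*)\}$ via strict separation of a point from a compact convex set, i.e., Gordan's theorem) is exactly the standard textbook proof, and you correctly handle the two delicate points: strictness of the separation, which the compactness of the finite convex hull secures, and realizing the descent step inside the arbitrary neighborhood $U$ from the definition of local Pareto optimality.
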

	
	For $k = 1$, these conditions reduce to the well-known optimality condition $\nabla f(x^*) = 0$. The set of points satisfying the KKT conditions is a superset of the (local) Pareto set and we make the following definition:
	
	\begin{definition}
		Let $x \in \R^n$. 
		\begin{itemize}
			\item[a)] If there exists some $\alpha \in \Delta_k$ (with $\Delta_k$ as in \eqref{eq:Delta_k}) such that \eqref{eq:KKT} holds, then $x$ is called \emph{Pareto critical} and $\alpha$ a \emph{KKT vector of} $x$ containing the \emph{KKT multipliers} $\alpha_i$, $i \in \{1,...,k\}$. The set of Pareto critical points $P_c$ of \eqref{eq:MOP} is called the \emph{Pareto critical set}. (Pareto critical points are sometimes also referred to as \emph{substationary points} by other authors.)
			\item[b)] In the situation of a), the pair $(x,\alpha) \in \R^n \times \Delta_k$ is called an \emph{extended Pareto critical point}. The set of all such pairs $P_\M \subseteq \R^n \times \Delta_k$ is called the \emph{extended Pareto critical set}.
		\end{itemize}
	\end{definition}
	
	Since the structure of $P_c$ and $P_\M$ will be important for our approach, we will briefly mention three results: In \cite{dM1976,LP2014} it was shown that $P_c$ is generically a stratification, which basically means that it is a ``manifold with boundaries and corners''. In \cite{GPD2019} it was shown that the boundary (or \emph{edge}) of $P_c$ is covered by Pareto critical sets of subproblems where only subsets of the set of objective functions are optimized. In \cite{HL1999} it was shown that $\{ (x,\alpha) \in P_\M : \alpha \in (\R^{> 0})^k \} \subseteq P_\M$ is a $(k-1)$-dimensional submanifold of $\R^{n+k}$ if a certain rank condition holds. 
	
	\section{Inferring objective vectors from data} \label{sec:generating_mops_from_data}
	We will now present a way to construct objective vectors for which $P_\M$ contains a finite set of given data points $\D_x = \{\bar{x}^1,...,\bar{x}^N \} \subseteq \R^n$ with corresponding KKT vectors $\D_\alpha  = \{ \bar{\alpha}^1, ..., \bar{\alpha}^N \} \subseteq \Delta_k$. The general concept of this inverse approach is to consider $x^*$ and $\alpha^*$ as given in \eqref{eq:KKT} instead of the objective vector $f$. So in contrast to the usual task of searching for an $x \in \R^n$ for which an $\alpha \in \Delta_k$ exists so that \eqref{eq:KKT} holds, we now search for an $f \in C^1(\R^n,\R^k)$ for which \eqref{eq:KKT} holds for all $\bar{x}^j$ and $\bar{\alpha}^j$, $j \in \{ 1, ..., N \}$. As it is infinite-dimensional, we obviously cannot search the entire $C^1(\R^n,\R^k)$. Instead, we consider finite-dimensional linear subspaces of $C^1(\R^n,\R)$ that are spanned by a set of basis functions $\B = \{ b_1, ..., b_d \} \subseteq C^1(\R^n,\R)$, and then search for $f \in \vspan(\B)^k$. An example for the choice of basis functions are the monomials in $n$ variables (up to a certain degree) such that $\vspan(\B)$ is the space of polynomials. The usage of basis functions reduces the task of finding an $f \in C^1(\R^n,\R^k)$ to the task of finding the coefficients $c \in \R^d$ of the corresponding linear combination of basis functions. This problem can be stated as a homogeneous linear problem in $c$ and can be solved efficiently via singular value decomposition. In particular, the smallest singular value can be used as a measure of how well the given data set can be represented as an extended Pareto critical set of an objective vector consisting of the given basis functions.
	
	We will assume for the remainder of this section that the following are given:
	\begin{itemize}
		\item a data set $\D = \{ (\bar{x}^1,\bar{\alpha}^1), (\bar{x}^2,\bar{\alpha}^2), ..., (\bar{x}^N,\bar{\alpha}^N) \} \subseteq \R^n \times \Delta_k$ (and in particular the number of objective functions $k$),
		\item a set of basis functions $\B = \{b_1,...,b_d\} \subseteq C^1(\R^n,\R)$ with linearly independent derivatives.
	\end{itemize}
	
	\subsection{Existence of exact approximations}
	In this subsection, our goal is to find an objective vector with components in $\vspan(\B)$ for which the set $\D$ is exactly extended Pareto critical. In other words, our goal is to find a function $f : \R^n \rightarrow \R^k$, $f = (f_i)_{i \in \{1,...,k\}}$, $f \neq 0$ with $f_i \in \vspan(\B)$ $\forall i \in \{1,...,k\}$ and 
	\begin{equation} \label{eq:KKT_exact}
		Df(\bar{x})^\top \bar{\alpha} = 0 \quad \forall (\bar{x},\bar{\alpha}) \in \D.
	\end{equation}
	To this end, for $f_i \in \vspan(\B)$, we can write 
	\begin{equation} \label{eq:f_in_basis}
		f_i(x) = \sum_{j = 1}^d c_{ij} b_j(x)
	\end{equation}		
	for some $c_i \in \R^d$. Thus, we obtain
	\begin{equation*}
		Df(x)^\top \alpha = \sum_{i = 1}^k \alpha_i \nabla f_i(x) = \sum_{i = 1}^k \alpha_i \sum_{j = 1}^d c_{ij} \nabla b_j(x) = \sum_{i = 1}^k \sum_{j = 1}^d \alpha_i c_{ij} \nabla b_j(x) = L(x,\alpha) c
	\end{equation*}
	with 
	\begin{equation} \label{eq:c_order}
		c = (c_{11},...,c_{1d},c_{21},...,c_{2d},...,c_{k1},...,c_{kd})^\top \in \R^{k \cdot d}
	\end{equation}
	and
	\begin{align*}
		&L(x,\alpha) := 	\\
		&(\alpha_1 \nabla b_1(x),...,\alpha_1 \nabla b_d(x),\alpha_2 \nabla b_1(x),...,\alpha_2 \nabla b_d(x),...,\alpha_k \nabla b_1(x),...,\alpha_k \nabla b_d(x)) \in \R^{n \times (k \cdot d)}.
	\end{align*}
	Let 
	\begin{equation*}
		\L := \begin{pmatrix}
			L(\bar{x}^1,\bar{\alpha}^1) \\
			\vdots \\
			L(\bar{x}^N,\bar{\alpha}^N)
		\end{pmatrix}
		\in \R^{ (n \cdot N) \times (k \cdot d) }.
	\end{equation*}
	Then \eqref{eq:KKT_exact} is equivalent to the homogeneous linear system
	\begin{equation} \label{eq:KKT_exact_lin}
		\L c = 0.
	\end{equation}
	Since the derivatives of the basis functions are linearly independent, a (nontrivial) function satisfying \eqref{eq:KKT_exact} exists if and only if 
	\begin{equation} \label{eq:rank_L}
		rk(\L) < k \cdot d.
	\end{equation}
	We will now consider two cases for the dimension of system \eqref{eq:KKT_exact_lin}: \\
	\textit{Case 1}: $n \cdot N < k \cdot d$, i.e., \eqref{eq:KKT_exact_lin} is an underdetermined system. In this case, \eqref{eq:rank_L} automatically holds such that \eqref{eq:KKT_exact_lin} possesses at least one nontrivial solution. In other words, $dim(ker(\L)) > 0$. Note that in this case, our approach resembles an interpolation method. In fact, for $n = 1$, $k = 1$ and monomial basis functions, $\L$ is similar to the \textit{Vandermonde matrix} from polynomial interpolation (without the constant column). \\
	\textit{Case 2}: $n \cdot N \geq k \cdot d$, i.e., \eqref{eq:KKT_exact_lin} is a square or overdetermined system. This means that generically, \eqref{eq:KKT_exact_lin} does not have a solution, and we have to check the condition \eqref{eq:rank_L}. In practice, we can use singular value decomposition (SVD) to do this, as the rank of $\L$ equals the number of singular values of $\L$ that are non-zero. In particular, as $rk(\L) = k \cdot d - dim(ker(\L))$, it yields the dimension of the solution space of \eqref{eq:KKT_exact_lin}.
		
	For ease of notation, we make the following definition:
	\begin{definition}
		Let
		\begin{equation*}
			\F : \R^{k \cdot d} \rightarrow C^1(\R^n, \R^k), \quad c \mapsto (f_i)_{i \in \{ 1,...,k \}} = \left( \sum_{j = 1}^d c_{ij} b_j \right)_{i \in \{ 1,...,k \}}
		\end{equation*}
		be the map that maps a coefficient vector $c$ onto the corresponding objective vector $(f_i)_i$ (cf. \eqref{eq:f_in_basis} and \eqref{eq:c_order}).
	\end{definition}	
	
	It is easy to see that $\F$ is linear and by the linear independence of the derivatives of the basis functions, $\F$ is also injective.
	
	\subsection{Finding the best approximation}
	In most applications, one can expect that \eqref{eq:KKT_exact_lin} is overdetermined and that it cannot be solved exactly. Even if there was a solution, we would require exact data to find it, which is numerically impossible. Furthermore, the case where the data is slightly noisy is much more realistic. Therefore, it makes more sense to look for the MOP whose extended Pareto critical set is the best approximation for a given data set, i.e., where $\| Df(\bar{x})^\top \bar{\alpha} \|_2$ is as small as possible for all $(\bar{x},\bar{\alpha}) \in \D$. To this end, consider the problem
	\begin{equation} \label{eq:min_Lc}
		\min_{\| c \|_2 = 1} \| \L c \|_2,
	\end{equation}
	where the vector of coefficients is constrained to the unit sphere $\mathcal{S}^{(k \cdot d) - 1}$ in $\R^{k \cdot d}$ to avoid the trivial solution $c^* = 0$. If $c^*$ is a solution of \eqref{eq:min_Lc} and $f = \F(c^*)$ is the corresponding objective vector, then
	\begin{equation} \label{eq:upper_bound_KKT}
		\| Df(\bar{x})^\top \bar{\alpha} \|_2 = \| L(\bar{x},\bar{\alpha}) c^* \|_2 \leq \| \L c^* \|_2 \quad \forall (\bar{x},\bar{\alpha}) \in \D,
	\end{equation}
	i.e., the optimal value of \eqref{eq:min_Lc} is an upper bound for all $\| Df(\bar{x})^\top \bar{\alpha} \|_2$ with $(\bar{x},\bar{\alpha}) \in \D$. In particular, the optimal value of \eqref{eq:min_Lc} is zero if and only if \eqref{eq:rank_L} holds. Problem \eqref{eq:min_Lc} can easily be solved using SVD (see, e.g., \cite{GGK2014}): Assume that $n \cdot N \geq k \cdot d$, i.e., \eqref{eq:KKT_exact_lin} is overdetermined. Let
	\begin{equation*}
		\L = U S V^\top
	\end{equation*}
	be the SVD of $\L$ with sorted singular values $s_1 \leq s_2 \leq ... \leq s_{k \cdot d}$. Let $v_1, ..., v_{k \cdot d} \in \R^{k \cdot d}$ be the right-singular vectors of $\L$, i.e., the columns of $V$. Then
	\begin{equation} \label{eq:sol_min_Lc}
		\min_{\| c \|_2 = 1} \| \L c \|_2 = s_1 \quad \text{and} \quad \argmin_{\| c \|_2 = 1} \| \L c \|_2 = \vspan( \{ v_i : s_i = s_1 \} ) \cap \mathcal{S}^{(k \cdot d) - 1}.
	\end{equation}
	Consequently, $s_1$ is a measure for how well the data set $\D$ can be approximated with the extended Pareto critical set of an MOP where the objective functions are linear combinations of the basis functions in $\B$. Furthermore, the singular values of $\L$ can be used to determine the dimension of the space of approximating objective vectors.
	
	\begin{algorithm} 
		\caption{Generate objective vector from data}
		\label{algo:MOP_from_data}
		Given: Data set $\D \subseteq \R^n \times \Delta_k$, basis functions $\B \subseteq C^1(\R^n,\R)$, threshold $\overline{s}$.
		\begin{algorithmic}[1] 
			\State Assemble $\L$.
			\State Calculate the SVD of $\L$ with singular values $s_1 \leq s_2 \leq ... \leq s_{k \cdot d}$ and right-singular vectors $v_1,...,v_{k \cdot d}$.
			\State Identify the indices $I = \{1, ..., i^* \}$, $i^* \leq k \cdot d$, such that $s_{i} \leq \overline{s}$ for all $i \in I$.
			\State Choose an element 
				\begin{equation*}
					c^* \in \vspan( \{ v_i : i \in I \} ) \setminus \{ 0 \} \subseteq \R^{k \cdot d}
				\end{equation*}							
			with $\|c^*\|_2 = 1$.
			\State Assemble the objective vector $f = \F(c^*)$ as in \eqref{eq:f_in_basis}.
		\end{algorithmic} 
	\end{algorithm}

	Algorithm \ref{algo:MOP_from_data} summarizes the numerical procedure which follows from the above considerations.
	The resulting approximation then satisfies the following property:	
	\begin{theorem} \label{theorem:convergence}
		Let $f$ be the result of Algorithm \ref{algo:MOP_from_data} and $s_{i^*}$ be the largest singular value less or equal to $\bar{s}$. Then
		\begin{equation*}
			\| Df(\bar{x})^\top \bar{\alpha} \|_2 \leq s_{i^*} \quad \forall (\bar{x},\bar{\alpha}) \in \D.
		\end{equation*}
		In particular, if $s_{i^*} = 0$, then all $(\bar{x},\bar{\alpha}) \in \D$ are extended Pareto critical for the MOP with objective vector $f$.
	\end{theorem}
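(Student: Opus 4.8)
The plan is to reduce the statement to an estimate purely in terms of the singular values of $\L$, using facts already assembled in the excerpt. The starting observation is that, by construction, $f = \F(c^*)$, so that for every $(\bar{x},\bar{\alpha}) \in \D$ we have $Df(\bar{x})^\top \bar{\alpha} = L(\bar{x},\bar{\alpha}) c^*$, and the pointwise bound \eqref{eq:upper_bound_KKT} gives $\| Df(\bar{x})^\top \bar{\alpha} \|_2 = \| L(\bar{x},\bar{\alpha}) c^* \|_2 \leq \| \L c^* \|_2$. Thus it suffices to show $\| \L c^* \|_2 \leq s_{i^*}$, after which the claimed inequality holds uniformly over $\D$.

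To bound $\| \L c^* \|_2$, I would exploit the SVD $\L = U S V^\top$ directly. Since the algorithm selects $c^* \in \vspan(\{ v_i : i \in I\})$ with $I = \{1,\dots,i^*\}$ and $\|c^*\|_2 = 1$, write $c^* = \sum_{i \in I} \gamma_i v_i$ with $\sum_{i \in I} \gamma_i^2 = 1$ by orthonormality of the right-singular vectors. Applying $\L$ and using the defining relation $\L v_i = s_i u_i$ for the left-singular vectors $u_i$ yields $\L c^* = \sum_{i \in I} \gamma_i s_i u_i$. By orthonormality of the $u_i$, this gives $\| \L c^* \|_2^2 = \sum_{i \in I} \gamma_i^2 s_i^2$.

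The final step uses the sorting of the singular values: since $s_i \leq s_{i^*}$ for every $i \in I = \{1,\dots,i^*\}$, we obtain $\| \L c^* \|_2^2 = \sum_{i \in I} \gamma_i^2 s_i^2 \leq s_{i^*}^2 \sum_{i \in I} \gamma_i^2 = s_{i^*}^2$, so $\| \L c^* \|_2 \leq s_{i^*}$. Combined with the reduction above, this proves the inequality. For the ``in particular'' claim, $s_{i^*} = 0$ forces $Df(\bar{x})^\top \bar{\alpha} = 0$ for all $(\bar{x},\bar{\alpha}) \in \D$; since each $\bar{\alpha} \in \Delta_k$ by assumption on $\D$, this is exactly \eqref{eq:KKT}, so every data point is extended Pareto critical for $f$. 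I do not expect a genuine obstacle here; the only point requiring care is that $c^*$ lies entirely in the span of the singular vectors indexed by $I$, so that no singular value exceeding $s_{i^*}$ enters the expansion of $\L c^*$ — this is precisely what restricts the sum and makes $s_{i^*}$ the correct bound.
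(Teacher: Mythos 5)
Your proposal is correct and follows essentially the same argument as the paper: both expand $c^*$ in the right-singular vectors indexed by $I$ (the paper writes $c^* = V\lambda$ with $\lambda_{i^*+1} = \dots = \lambda_{k\cdot d} = 0$, you write $c^* = \sum_{i \in I}\gamma_i v_i$), apply the SVD to get $\|\L c^*\|_2^2 = \sum_{i \in I} s_i^2 \gamma_i^2 \leq s_{i^*}^2$, and then invoke the pointwise bound \eqref{eq:upper_bound_KKT}. The only cosmetic difference is that the paper argues in matrix form ($\L V \lambda = U S \lambda$) while you use the componentwise relation $\L v_i = s_i u_i$; the substance is identical.
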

	\begin{proof}
		Let $c^*$ be the coefficient vector in step $4$ such that $f = \F(c^*)$. Then there is some $\lambda \in \R^{k \cdot d}$ with $c^* = V \lambda$, $\lambda_{i^* + 1} = ... = \lambda_{k \cdot d} = 0$ and $1 = \| c^* \|_2 = \| V \lambda \|_2 = \| \lambda \|_2$. Thus
		\begin{equation*}
			\| \L c^* \|_2 = \| \L V \lambda \|_2 = \| U S \lambda \|_2 = \| S \lambda \|_2 = \sqrt{ \sum_{i = 1}^{k \cdot d} s_i^2 \lambda_i^2} \leq s_{i^*} \sqrt{ \sum_{i = 1}^{k \cdot d} \lambda_i^2} = s_{i^*} \| \lambda \|_2 = s_{i^*}.
		\end{equation*}
		Combining this with \eqref{eq:upper_bound_KKT} completes the proof.
	\end{proof}
	
	Some properties of Algorithm \ref{algo:MOP_from_data} are highlighted in the following remark.
	\begin{remark} \label{rem:algorithm}
		\begin{enumerate}[a)]
			\item Algorithm \ref{algo:MOP_from_data} can also be applied when \eqref{eq:KKT_exact_lin} is underdetermined, i.e., when $n \cdot N \leq k \cdot d$, by treating $v_{(n \cdot N) + 1}, ..., v_{k \cdot d}$ as right-singular vectors to the ``singular value'' zero.
			\item In general, if $i^* > 1$, there is no obvious choice for $c$ in step 4. A possible approach is to choose $c$ as sparse as possible (using, e.g., $L_1$ minimization \cite{T1996}). This can be very advantageous for interpretability, see also \cite{BPK2016} for sparse identification in the dynamical systems context.
			\item It is important to note that by construction, if $s_{i^*} = 0$, we can only guarantee that $\D$ is a subset of the extended Pareto critical set of $f$. It is possible for the extended Pareto critical set to contain more than just $\D$ (cf. Example \ref{example:unit_circle}). Therefore, there are cases where the smallest singular value is $0$, but the corresponding MOP might not be desirable.
			\item According to \eqref{eq:sol_min_Lc}, if $s_{i^*} = 0$, we can take any element of $\vspan( \{ v_i : i \in I \} ) \setminus \{ 0 \}$ in step $3$ and do not need to normalize it.
		\end{enumerate}
	\end{remark}	
	
	We will conclude this section with a brief discussion on the choice of the set of basis functions $\B$. It should satisfy the following requirements:	
	\begin{enumerate}[(i)]
		\item The derivatives of the basis functions should be linearly independent to avoid trivial solutions. (In particular, this implies that the representation of the derivatives of elements of $\vspan(\B)$ via coefficients of the derivatives of elements of $\B$ is unique.)
		\item Since we have to evaluate the derivatives of the basis functions in every data point in $\D_x$ for the assembly of $\L$, the evaluation of these derivatives should be efficient.
		\item In practice, an initial, a priori choice of $\B$ will often be insufficient. Thus, it should be possible to increase the quality of the approximation by increasing the size of $\B$ without much effort.
	\end{enumerate}	
	
	An intuitive choice for $\B$ are the monomials in $n$ variables up to degree $l \in \N$, i.e.,
	\begin{equation*}
		\B = \{ x_1^{l_1} x_2^{l_2} \cdots x_n^{l_n} : l_i \in \N \cup \{ 0 \}, i \in \{1,...,n\}, 0 < l_1 + l_2 + ... + l_n \leq l \}.
	\end{equation*}
	It is easy to see that (i) and (ii) are satisfied for this choice. For (iii), the Stone-Weierstrass theorem (cf.~\cite{R1991}) implies that for any compact $D \subseteq \R^n$ and any $g \in C^1(D,\R)$, there exists a sequence of polynomials on $D$ that converges to $g$ (with respect to $\| \cdot \|_\infty$). Thus, for ${ g \in C^1(\R^n,\R^k) }$, uniform convergence with polynomial functions can be guaranteed component-wise on compact subsets of $\R^n$. Therefore, for the rest of this article, we will always consider the monomials up to a fixed degree as the set of basis functions.
	
	\section{Application 1: Constructing objectives from clean and noisy data} \label{sec:constructingObjectives}
	In this section, we will show how the results from Section \ref{sec:generating_mops_from_data} can be utilized to construct objective functions from clean and noisy data. Our first example will be the construction of test problems for MOP solvers, where the data comes from a discretization of some continuous (i.e., non-discrete) set. In the second example, we will consider a stochastic MOP, where we will reconstruct the expected value of the objective vector using stochastic (i.e., noisy) solution data.
	
	\subsection{Inferring objectives from exact data} \label{subsec:exactData}
	Test problems and generators of test problems are an important tool to investigate the behavior and to benchmark MOP solvers (cf. \cite{D1999, ZZZ2008, KWP2016}). The idea is to interpret our method from Section \ref{sec:generating_mops_from_data} as a way to generate MOPs where we already know the extended Pareto critical set. Instead of finitely many extended Pareto critical points, we here want to prescribe the complete set. Unfortunately, as discussed in Section \ref{sec:generating_mops_from_data}, this will generically fail, as we can only assure that the prescribed data set is Pareto critical for the MOP resulting from Algorithm \ref{algo:MOP_from_data} if $n \cdot N < k \cdot d$, i.e., 
	\begin{equation*}
		N < \frac{k \cdot d}{n}.
	\end{equation*}
	Nonetheless, it turns out that if we use monomials as basis functions, the resulting space of polynomials is large enough to contain objective vectors for many non-trivial classes of infinite data sets.
	
	When prescribing an infinite data set $\D^\infty = (\D^\infty_x, \D^\infty_\alpha) \subseteq \R^n \times \Delta_k$, we have to ensure that $\D^\infty$ has the properties of an extended Pareto critical set from a theoretical point of view. This is obviously the difficult part of this approach and requires some knowledge about the structure of (extended) Pareto critical sets. In the following, we will briefly summarize the generic implications of the results from Section \ref{sec:introduction_MOP}:
	\begin{itemize}
		\item According to \cite{H2001}, $\D^\infty$ should (locally) be a differentiable manifold. In practice, this means that similar $\bar{x} \in \D^\infty_x$ should have similar $\bar{\alpha} \in \D^\infty_\alpha$.
		\item Following \cite{GPD2019}, for points $(\bar{x},\bar{\alpha}) \in \D^\infty$ where $\bar{x}$ lies on the edge of $\D^\infty_x$, we have to ensure that $\bar{\alpha}_j = 0$ for some $j \in \{1,...,k\}$. In particular, for multiple Pareto critical points on the same edge, the same component of the corresponding $\bar{\alpha}$ has to be zero.
	\end{itemize}
	
	After constructing a data set $\D^\infty$ with the properties mentioned above, we consider a pointwise discretization $\D \subseteq \D^\infty$ for large $N = |\D|$ and apply Algorithm \ref{algo:MOP_from_data}. If the smallest singular value of $\L$ is zero, $\D^\infty$ will be in the extended Pareto critical set of the resulting MOP.

	
	\begin{example} \label{example:unit_circle}
		In this example, we will generate an MOP with two objective functions where the Pareto critical set is the unit circle $\mathcal{S}^1$ in $\R^2$. To this end, let $N \in \N$ and
		\begin{equation*}
			\D := \{ (\bar{x}^j, \bar{\alpha}^j) \in \R^2 \times \Delta_2 : j \in \{1,...,N\} \},
		\end{equation*}
		where
		\begin{equation} \label{eq:example_unit_circle_data}
			\bar{x}^j := \begin{pmatrix}
				cos(2 \pi \frac{j}{N}) \\
				sin(2 \pi \frac{j}{N})
			\end{pmatrix}
			\quad \text{and} \quad
			\bar{\alpha}^j := \begin{pmatrix}
				0.5 (cos(4 \pi \frac{j}{N}) + 1) \\
				1 - 0.5 (cos(4 \pi \frac{j}{N}) + 1)
			\end{pmatrix}.
		\end{equation}
		The $\bar{x}^j$ are points distributed equidistantly on $\mathcal{S}^1$ and the KKT vectors $\bar{\alpha}^j$ are $\frac{N}{2}$-periodic between $0$ and $1$. While the choice of the $\bar{x}^j$ is straight-forward, the selection of the corresponding KKT vectors is less obvious. We chose them periodically to ensure that there is no jump from $\bar{x}^N$ to $\bar{x}^1$. (Additionally, we chose a different ``frequency'' than for the $\bar{x}^j$, i.e., $4 \pi$ instead of $2 \pi$, to avoid unwanted structures in the data.) We choose monomials up to degree $3$ as basis functions, i.e.,
		\begin{equation*}
		\B := \{ x_1, x_1^2, x_1^3, x_2, x_1 x_2, x_1^2 x_2, x_2^2, x_1 x_2^2, x_2^3 \},
		\end{equation*}
		and use $N = 1000$ data points. Figure \ref{fig:example_circle_results}(a) shows the sorted singular values of the resulting $\L \in \R^{2000 \times 18}$.
		The first two singular values $s_1 = 3.92 \cdot 10^{-15}$ and $s_2 = 9.69 \cdot 10^{-15}$ are small and there is an obvious gap from $s_2$ to $s_3 = 5.41$. Since $s_1$ and $s_2$ are both close to zero, we choose the threshold $\bar{s} = s_2$, i.e., $I = \{1, 2\}$, in step 3 of Algorithm \ref{algo:MOP_from_data}. The corresponding columns of $V$ are given by
		\begin{align*}
			v_1 &= (-0.9040, 0, 0.3013, 0, 0, 0, 0, 0, 0.010, 0, 0, 0.3013, -0.030, 0, 0, 0, 0, 0.010)^\top, \\
			v_2 &= (-0.030, 0, 0.010, 0, 0, 0, 0, 0, -0.3013, 0, 0, 0.010, 0.9040, 0, 0, 0, 0, -0.3013)^\top.
		\end{align*}
		In this example, it is easy to see that there is a certain pattern in $v_1$ and $v_2$, so that we can write
		\begin{equation} \label{eq:example_circle_theoretical_span}
			\vspan(\{v_1, v_2\}) = \{ ( -3 \sigma_1, 0, \sigma_1, 0, 0, 0, 0, 0, \sigma_2, 0, 0, \sigma_1, -3 \sigma_2, 0, 0, 0, 0, \sigma_2 )^\top : \sigma_1, \sigma_2 \in \R \}.
		\end{equation}
		Unfortunately, not all elements of $\vspan(\{v_1, v_2\}) \setminus \{ 0 \}$ in step 4 lead to desirable objective vectors. To see this, consider the element $c$ corresponding to $\sigma_1 = 0$ and $\sigma_2 = 1$, i.e.,
		\begin{equation*}
			c = (0,0,0,0,0,0,0,0,1,0,0,0,-3,0,0,0,0,1)^\top.
		\end{equation*}
		The corresponding objective vector is given by
		\begin{equation} \label{eq:example_circle_degenerated_objective}
			\F(c)(x) = \begin{pmatrix}
				x_2^3 \\
				x_2^3 - 3 x_2
			\end{pmatrix}.
		\end{equation}
		For this objective vector, the extended Pareto critical set indeed contains the given data set. However, the entire Pareto critical set for this problem is given by $\R \times [-1,1]$, hence it contains significantly more than what we prescribed. In this case, the degeneracy is caused by the fact that this objective vector does not depend on $x_1$. A better choice for $c$ would be, e.g., $\sigma_1 = 1$ and $\sigma_2 = 1$, resulting in
		\begin{equation*}
			c = (-3,0,1,0,0,0,0,0,1,0,0,1,-3,0,0,0,0,1)^\top.
		\end{equation*}
		The corresponding objective vector $f = \F(c)$ is given by
		\begin{equation*}
			f(x) = \begin{pmatrix}
				- 3 x_1 + x_1^3 + x_2^3 \\
				- 3 x_2 + x_1^3 + x_2^3
			\end{pmatrix}.
		\end{equation*}
		One can show that for this objective vector, the KKT conditions are indeed equivalent to
		\begin{align*}
			&x_1^2 + x_2^2 = 1, \\
			&\alpha_1 = x_1^2, \\
			&\alpha_2 = x_2^2,
		\end{align*}
		i.e., the Pareto critical set is precisely $\mathcal{S}^1$ with the corresponding KKT vectors given in \eqref{eq:example_unit_circle_data}. (In particular, we did not need to normalize $c$ in step 4 of Algorithm \ref{algo:MOP_from_data} in this case.) Figures \ref{fig:example_circle_results}(b) and (c) show the Pareto critical set and the image of the Pareto critical set under $f$.
		\begin{figure}[t] 
			\parbox[b]{0.32\textwidth}{
				\centering 
				\includegraphics[width=.32\textwidth]{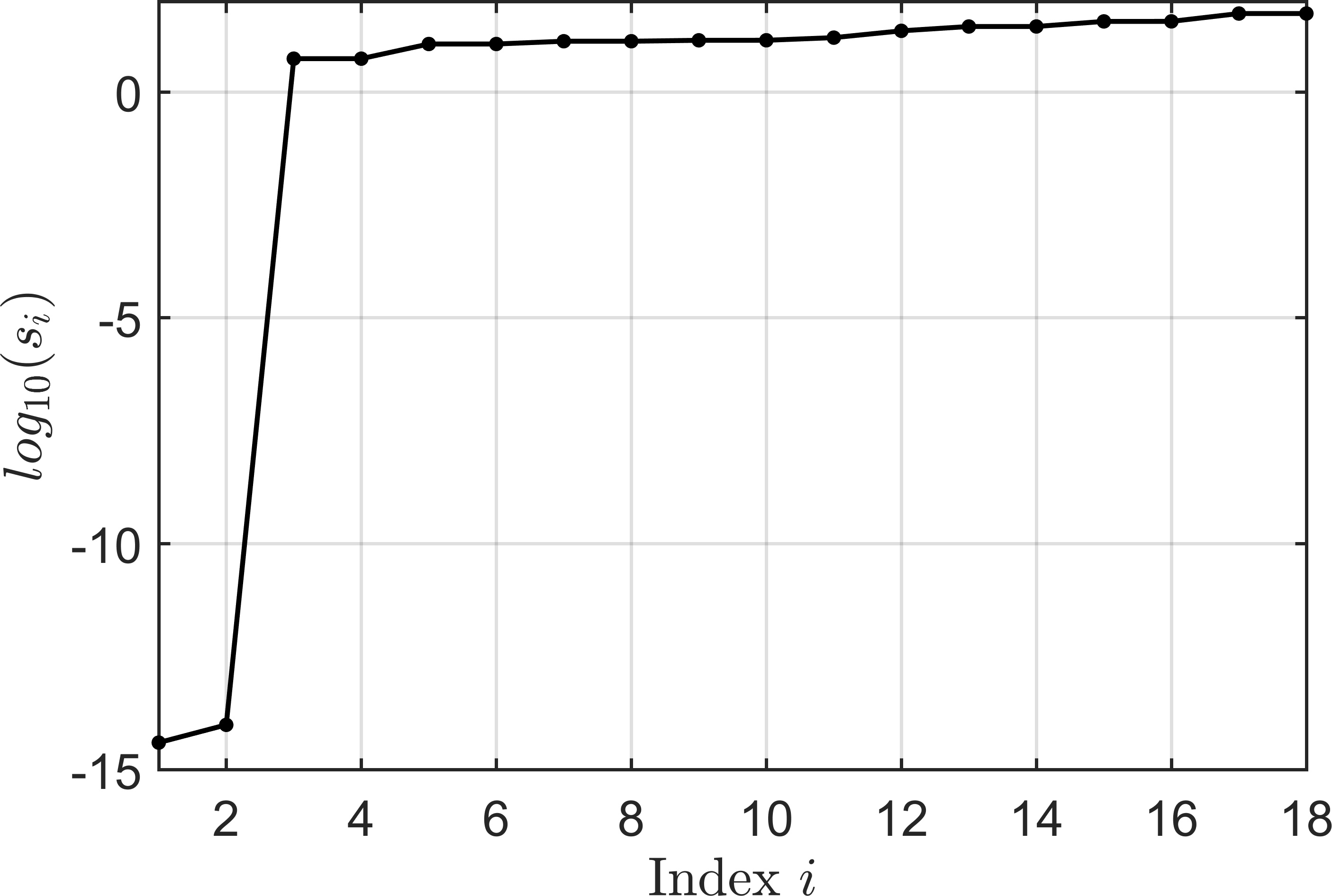}\\[1em]
				\textbf{(a)}
			}
			\parbox[b]{0.32\textwidth}{
				\centering 
				\includegraphics[width=0.32\textwidth]{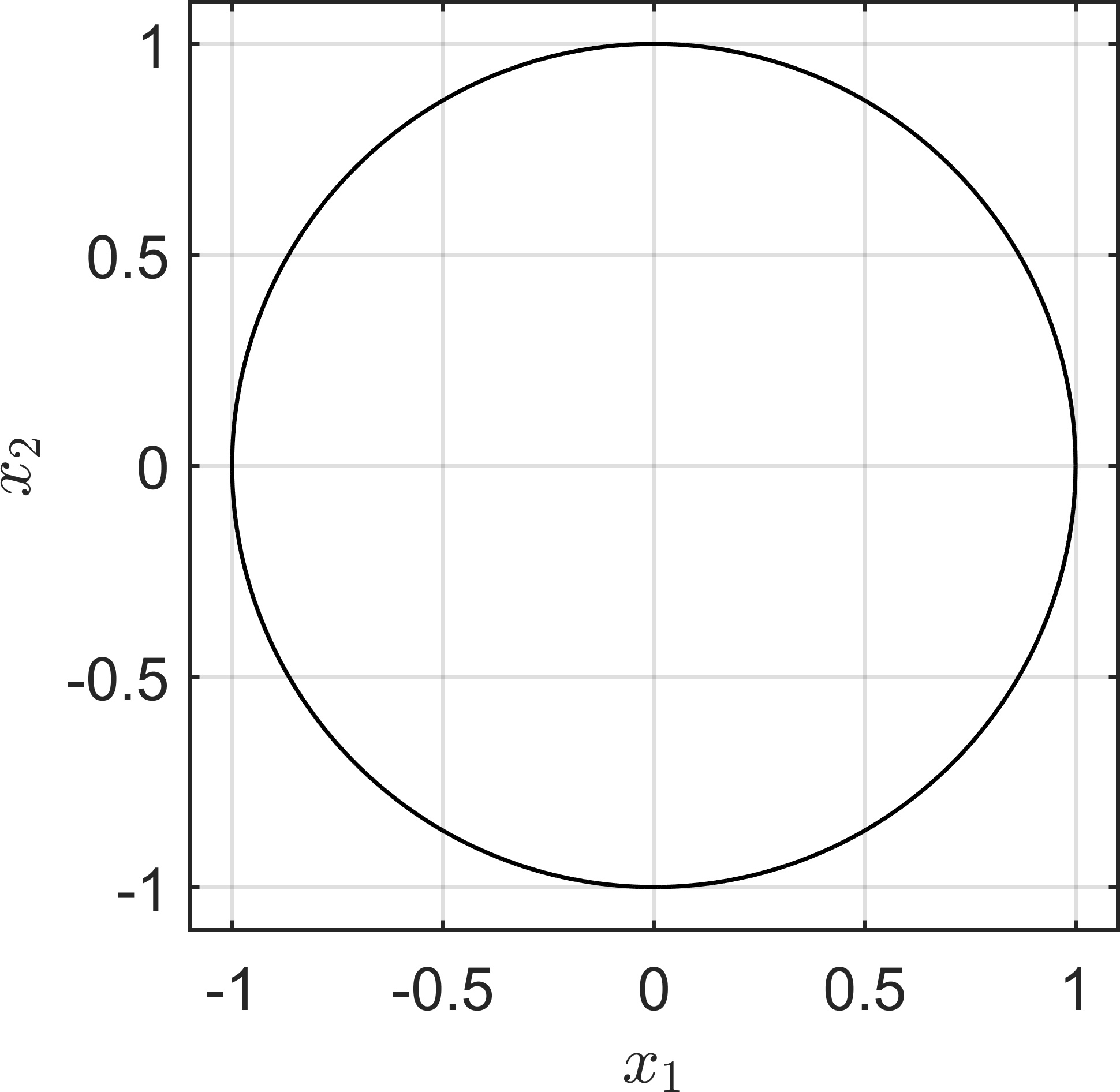}\\[1em]
				\textbf{(b)}
			}
			\parbox[b]{0.32\textwidth}{
				\centering 
				\includegraphics[width=0.32\textwidth]{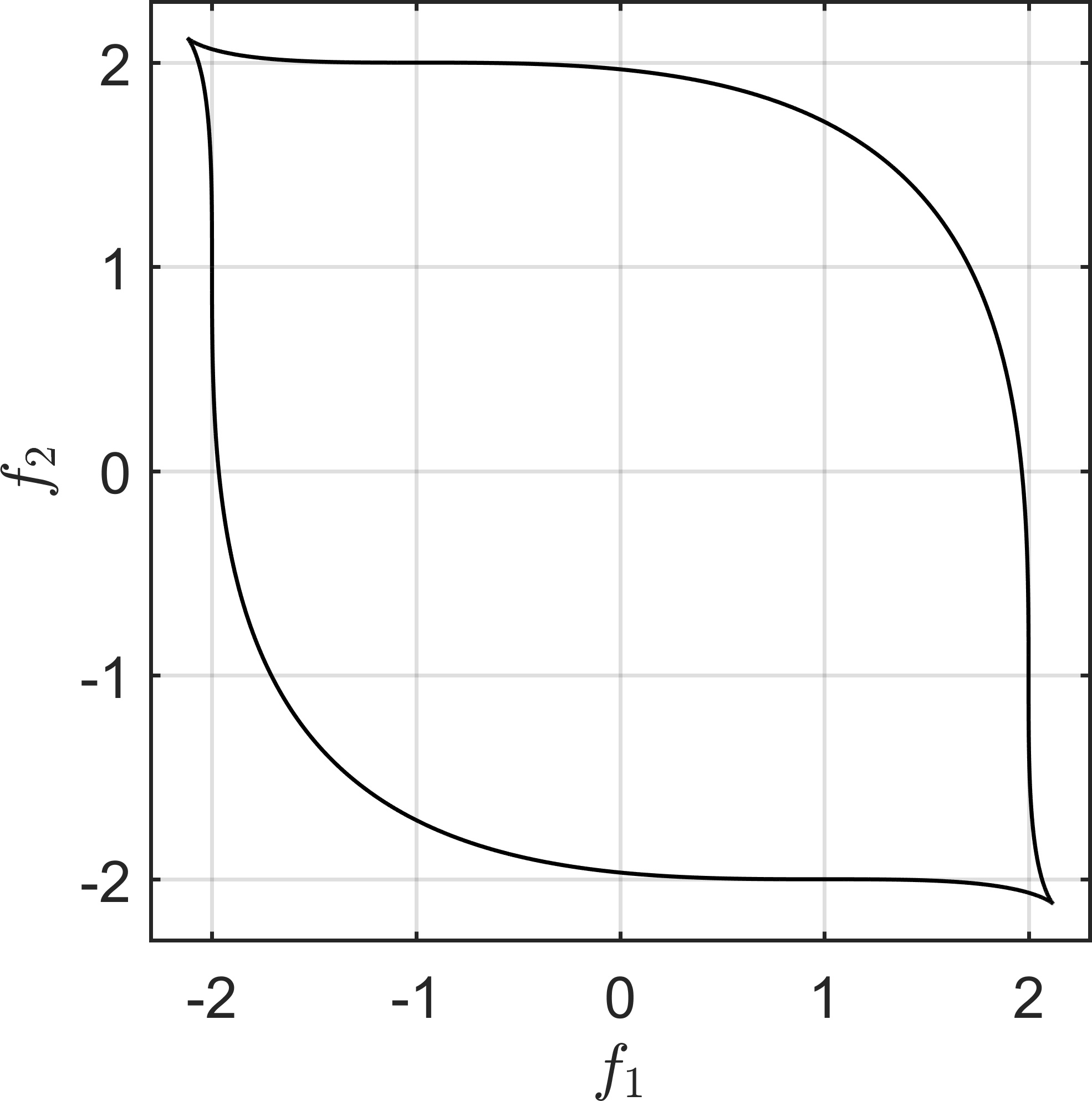}\\[1em]
				\textbf{(c)}
			}
			\caption{\textbf{(a)} Singular values of $\L$ in Example \ref{example:unit_circle}. \textbf{(b)} Pareto critical set of $f$. \textbf{(c)} Image of the Pareto critical set of $f$.}
			\label{fig:example_circle_results}
		\end{figure}	
	\end{example}
	
	Example \ref{example:unit_circle} shows how the results from Section \ref{sec:generating_mops_from_data} can be used to derive an explicit expression for an objective vector for a prescribed data set. The following example shows that we can even derive more general formulas.
	
	\begin{example} \label{example:ellipses}
		Using the same strategy as in Example \ref{example:unit_circle}, it is possible to numerically verify that arbitrary ellipses can be represented as Pareto critical sets of polynomial MOPs. For $a,b \in \R^{>0}$, we merely have to replace the $\bar{x}^j$ from Example \ref{example:unit_circle} by
		\begin{equation*}
			\bar{x}^j := \begin{pmatrix}
				a \cdot cos(2 \pi \frac{j}{N}) \\
				b \cdot sin(2 \pi \frac{j}{N})
			\end{pmatrix}.
		\end{equation*}
		In this case, if we consider the analogous expression to \eqref{eq:example_circle_theoretical_span}, we see that variations of $a$ and $b$ only influence a single component, respectively. In general, the following pattern can be recognized:
		\begin{equation*}
			\vspan(\{v_1, v_2\}) = \{ ( -3 a^2 \sigma_1, 0, \sigma_1, 0, 0, 0, 0, 0, \sigma_2, 0, 0, \sigma_1, -3 b^2 \sigma_2, 0, 0, 0, 0, \sigma_2 )^\top : \sigma_1, \sigma_2 \in \R \},
		\end{equation*}
		which leads to the following conjecture: Let
		\begin{equation*}
			f : \R^2 \rightarrow \R^2, \quad x \mapsto
			\begin{pmatrix}
				-3 a^2 x_1 + x_1^3 + x_2^3 \\
				-3 b^2 x_2 + x_1^3 + x_2^3
			\end{pmatrix}.
		\end{equation*}
		Then
		\begin{equation*}
			P_c = \left\{ x \in \R^2 : \frac{x_1^2}{a^2} + \frac{x_2^2}{b^2} = 1 \right\}
		\end{equation*}
		and the KKT vector corresponding to $x \in P_c$ is given by $\alpha = \left( \frac{x_1^2}{a^2}, \frac{x_2^2}{b^2} \right)^\top$. After deriving this conjecture numerically, it is straight-forward to prove that it actually holds.
	\end{example}
	
	In Examples \ref{example:unit_circle} and \ref{example:ellipses}, the symbolic expressions could easily be verified. In particular, in step 4 of Algorithm \ref{algo:MOP_from_data} we were able to choose $c$ such that the Pareto critical set did not contain more than what we intended, i.e., $P_c$ was precisely the unit circle or an ellipse. This obviously only works if the data set is sufficiently well-structured. The following example shows a more complicated case.
	
	\begin{example} \label{example:3_con_comp}
		We are now searching for an MOP where the Pareto critical set contains three connected components $C_i$, $i = 1,2,3$, given by the following three non-intersecting straight lines:
		\begin{equation*}
			C_i = p_i + [0,1] \cdot \frac{1}{4} \frac{q_i}{\|q_i\|_2} \subseteq \R^2
		\end{equation*}
		with
		\begin{align*}
			p_1 = \begin{pmatrix}
				0.15 \\
				-0.20
			\end{pmatrix},~
			q_1 = \begin{pmatrix}
				0.47 \\
				0.04
			\end{pmatrix},~
			p_2 = \begin{pmatrix}
				0.47 \\
				-0.32
			\end{pmatrix},~
			q_2 = \begin{pmatrix}
				0.40 \\
				0.14
			\end{pmatrix},~
			p_3 = \begin{pmatrix}
				0.37 \\
				0.18
			\end{pmatrix},~
			q_3 = \begin{pmatrix}
				0.38 \\
				0.28
			\end{pmatrix}.
		\end{align*}
		They are shown in Figure \ref{fig:example_3_con_comp_solution}(a). For $\D_x$ we choose $N_c = 500$ equidistant points on each $C_i$, the corresponding $\D_\alpha$ are chosen linearly from $(0,1)^\top$ to $(1,0)^\top$, and we again use monomials as basis functions. When dealing with more complex data sets, we first have to estimate the required degree of monomials for a satisfactory approximation. To this end, we repeat step 2 of Algorithm \ref{algo:MOP_from_data} for different maximal degrees. The smallest singular value depending on the maximal degree of the monomials is shown in Figure \ref{fig:example_3_con_comp_singular_values}(a).
		\begin{figure}[ht] 
			\parbox[b]{0.49\textwidth}{
				\centering 
				\includegraphics[width=0.4\textwidth]{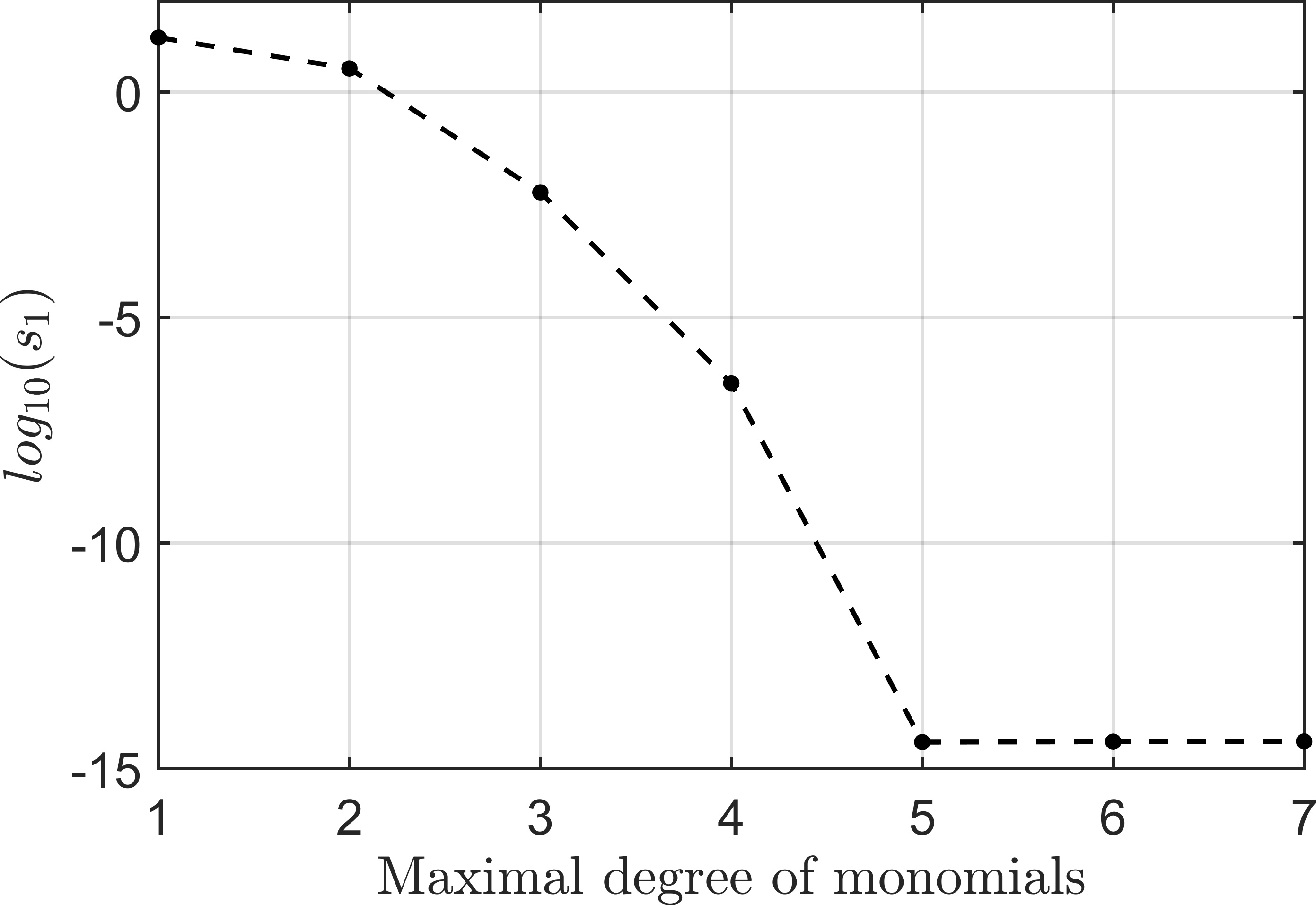}\\
				\textbf{(a)}
			}
			\parbox[b]{0.49\textwidth}{
				\centering 
				\includegraphics[width=0.4\textwidth]{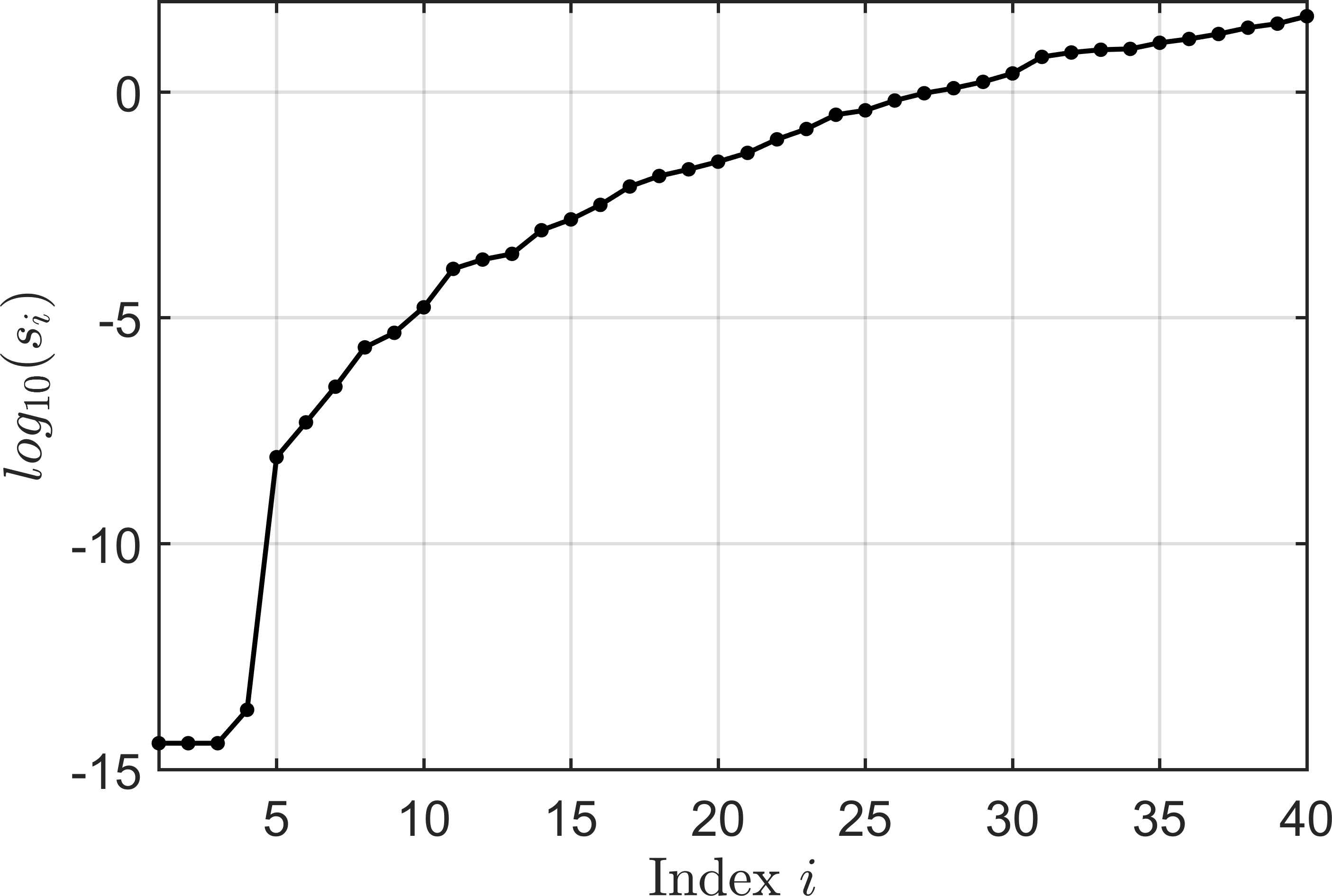}\\
				\textbf{(b)}
			}
			\caption{\textbf{(a)} Smallest singular value of $\L$ for different degrees of the monomial basis in Example \ref{example:3_con_comp}. \textbf{(b)} Singular values of $\L$ for degree $5$.}
			\label{fig:example_3_con_comp_singular_values}
		\end{figure}
		We see that the monomials up to a degree of $5$ are a promising choice, since the smallest singular values do not decrease further after that. Figure \ref{fig:example_3_con_comp_singular_values}(b) shows all singular values for this set of basis functions. There is a relatively large gap from $s_4 = 2.09 \cdot 10^{-14}$ to $s_5 = 8.17 \cdot 10^{-9}$, suggesting that $\bar{s} = s_4$, i.e., $I = \{1,2,3,4\}$, in step 3 of Algorithm \ref{algo:MOP_from_data} is a good choice. In this case, there is no obvious way to obtain an expression like \eqref{eq:example_circle_theoretical_span} for $\vspan(\{ v_1,v_2,v_3,v_4 \})$, which is why we choose $c = \frac{v_1}{\| v_1 \|_2}$ in step 4. The Pareto critical set of $f = \F(c)$ and its image are shown in Figure \ref{fig:example_3_con_comp_solution}.
		\begin{figure}[ht] 
			\parbox[b]{0.49\textwidth}{
				\centering 
				\includegraphics[width=0.35\textwidth]{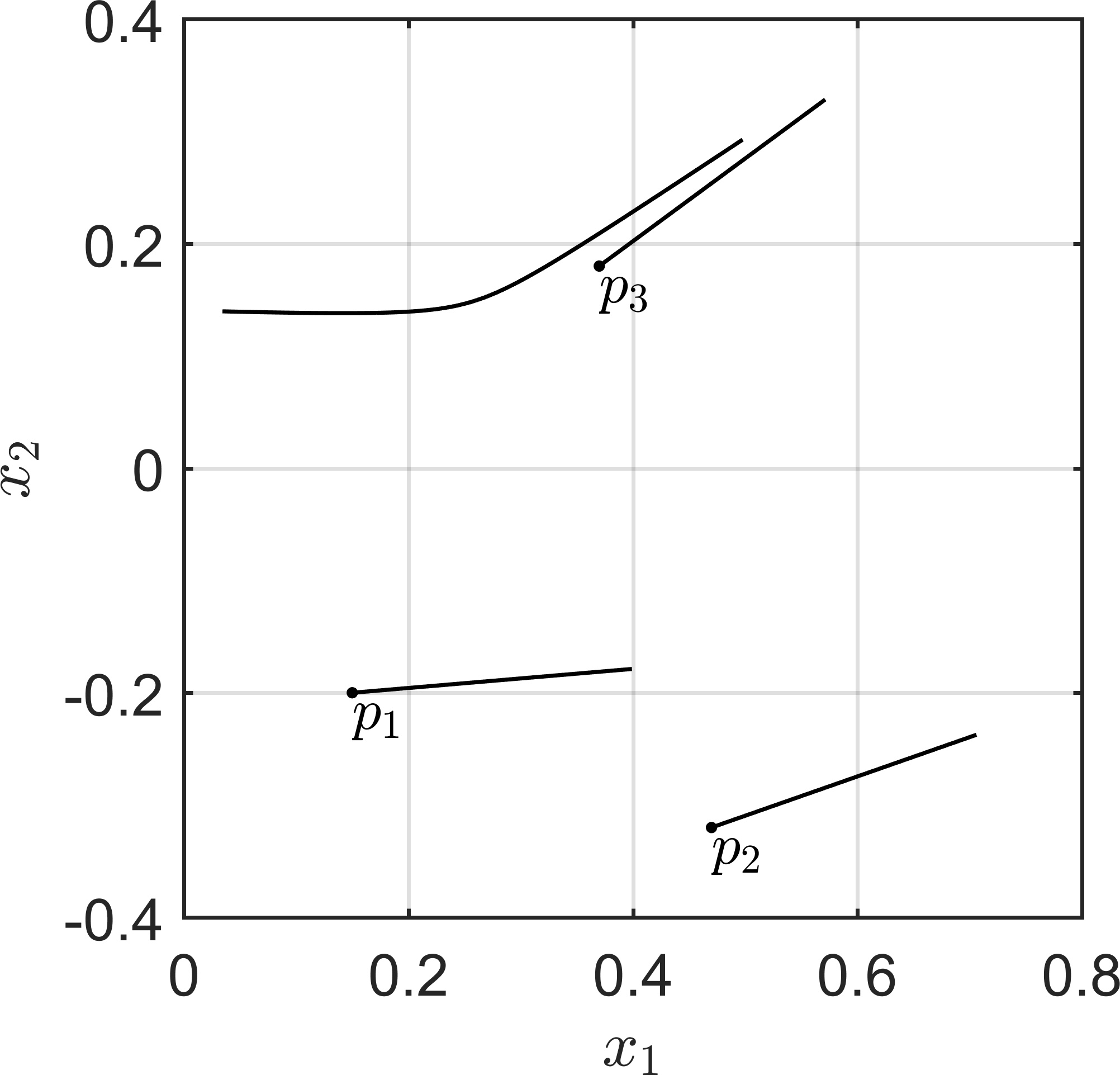}\\
				\textbf{(a)}
			}
			\parbox[b]{0.49\textwidth}{
				\centering 
				\includegraphics[width=0.35\textwidth]{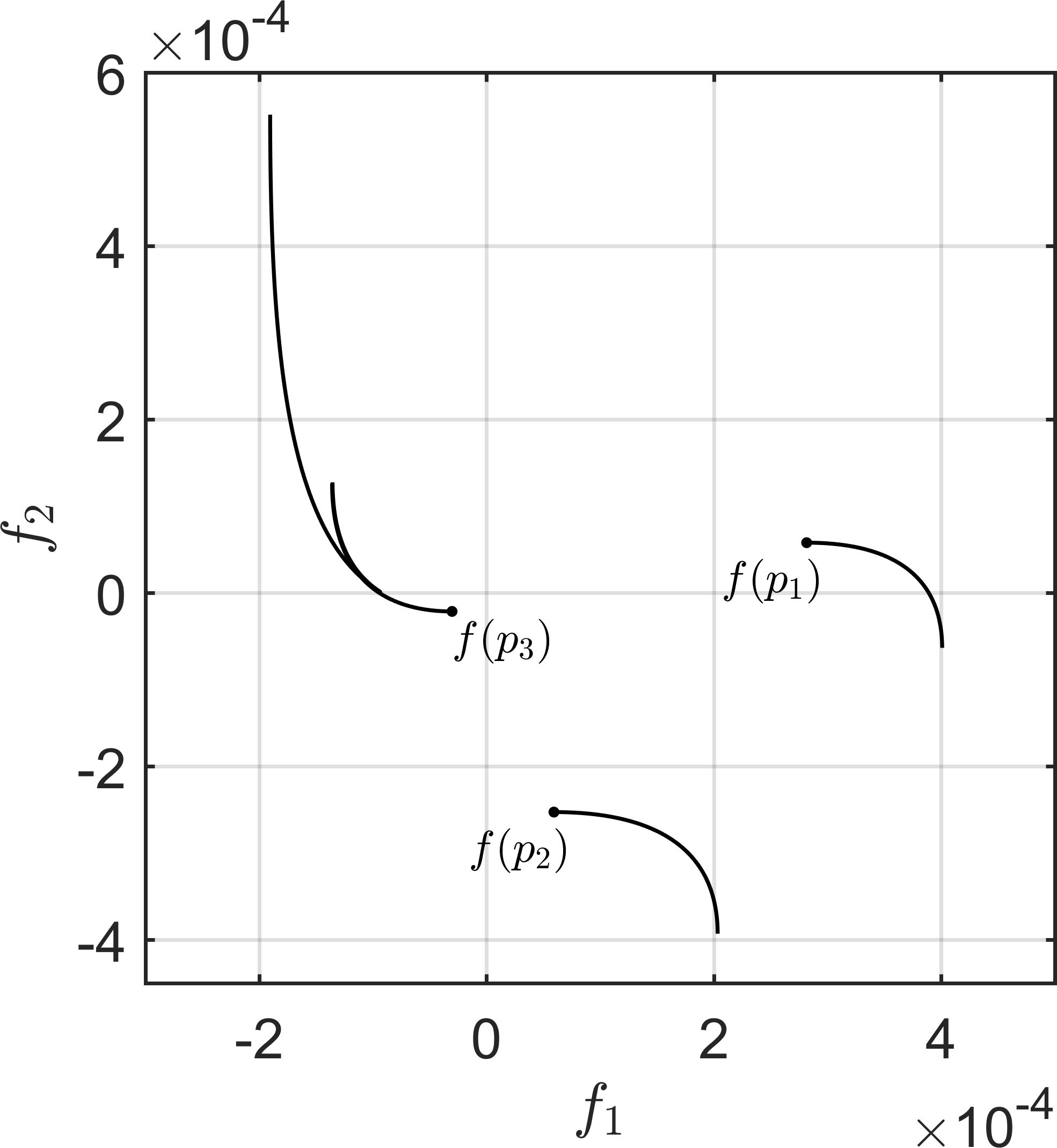}\\
				\textbf{(b)}
			}
			\caption{\textbf{(a)} Pareto critical set of $f$ in Example \ref{example:3_con_comp}. \textbf{(b)} Image of the Pareto critical set of $f$.}
			\label{fig:example_3_con_comp_solution}
		\end{figure}
		As expected from the small singular values, the given data set is approximated almost perfectly. Unfortunately, we observe an additional connected component that is not contained in the data. Since we are unable to influence properties outside the given data set $\D$, additional Pareto critical points can be expected in the general case. However, these can be identified subsequently via comparison to the data set $\D$ in many cases.
	\end{example}
	
	\subsection{Inferring objectives from noisy data}\label{subsec:inexactData}
	In the previous examples, we assumed that we have precise data $\D$ that we want to approximate with an extended Pareto critical set. However, there are many cases where this assumption is unrealistic, for instance real-world applications where the data stems from numerical simulations or measurements. Another example is stochastic multiobjective optimization, which we will consider here. We will only give a brief introduction on this topic and refer to \cite{FX2011} for a more detailed discussion. 
	
	Let $\xi \in \R^m$ be a random vector and $f : \R^n \times \R^m \rightarrow \R^k$. For $x \in \R^n$ let $\mathbb{E}[f(x,\xi)]$ be the (component-wise) expected value of $f(x,\xi)$. For $F(x) := \mathbb{E}[f(x,\xi)]$ we consider the stochastic multiobjective optimization problem
	\begin{equation} \label{eq:SMOP}
		\min_{x \in \R^n} F(x). \tag{SMOP}
	\end{equation}
	Since we cannot evaluate $F$ directly, in practice the sample average
	\begin{equation}
		\tilde{f}^{N_s}(x) = \frac{1}{N_s} \sum_{j = 1}^{N_s} f(x,\xi^j) \approx F(x)
	\end{equation}	 
	is used, where $\xi^1,...,\xi^{N_s}$ are identically independent distributed samples of $\xi$. Using this approximation, we consider the \emph{Sample Average Approximation} problem
	\begin{equation} \label{eq:SAA}
		\min_{x \in \R^n} \tilde{f}^{N_s}(x). \tag{SAA}
	\end{equation}
	For $N_s = \infty$ the solutions of \eqref{eq:SMOP} and \eqref{eq:SAA} coincide. Otherwise, for a finite $N_s \in \N$, we can only expect the solution of \eqref{eq:SAA} to be an approximation of the solution of \eqref{eq:SMOP}. In other words, we can consider the solution of \eqref{eq:SAA} as inexact data of the solution of the original problem \eqref{eq:SMOP} and use our approach to approximate the original solution and objective vector $F$. 
	We illustrate this approach on the following Multiobjective Stochastic Location Problem from \cite{FX2011}.
	\begin{example} \label{example:stochastic}
		Let $a := (-1,-1)^\top$ and $\xi := (\xi_1,0)^\top$ be a random vector, where $\xi_1$ is uniformly distributed on $[0,2]$. Let
		\begin{align*}
			f(x,\xi) := 
			\begin{pmatrix}
				\| x - a \|_2^2 \\
				\| x - \xi \|_2^2
			\end{pmatrix}.
		\end{align*}
		In this case, we have 
		\begin{equation*}
			F(x) = \mathbb{E}[f(x,\xi)] = 
			\begin{pmatrix}
				\| x - a \|_2^2 \\
				\| x - (1,0)^\top \|_2^2 + 1/3				
			\end{pmatrix}
			=
			\begin{pmatrix}
				2 x_1 + x_1^2 + 2 x_2 + x_2^2 + 2 \\
				-2 x_1 + x_1^2 + x_2^2 + 4/3				
			\end{pmatrix}
		\end{equation*}
		so the Pareto critical (and in this case Pareto optimal) set of \eqref{eq:SMOP} is given by the line connecting $a$ and $(1,0)^\top$. The KKT vector corresponding to $x = t a + (1-t) (1,0)^\top$, $t \in [0,1]$, is given by $\alpha = (t,1-t)^\top$. If we solve \eqref{eq:SAA} instead of \eqref{eq:SMOP} (via the smoothing Chebyshev scalarization described in \cite{FX2011}), we obtain an approximation of the original Pareto set as in Figure \ref{fig:example_stochastic_data}(a).
		\begin{figure}[ht] 
			\parbox[b]{0.32\textwidth}{
				\centering 
				\includegraphics[width=0.32\textwidth]{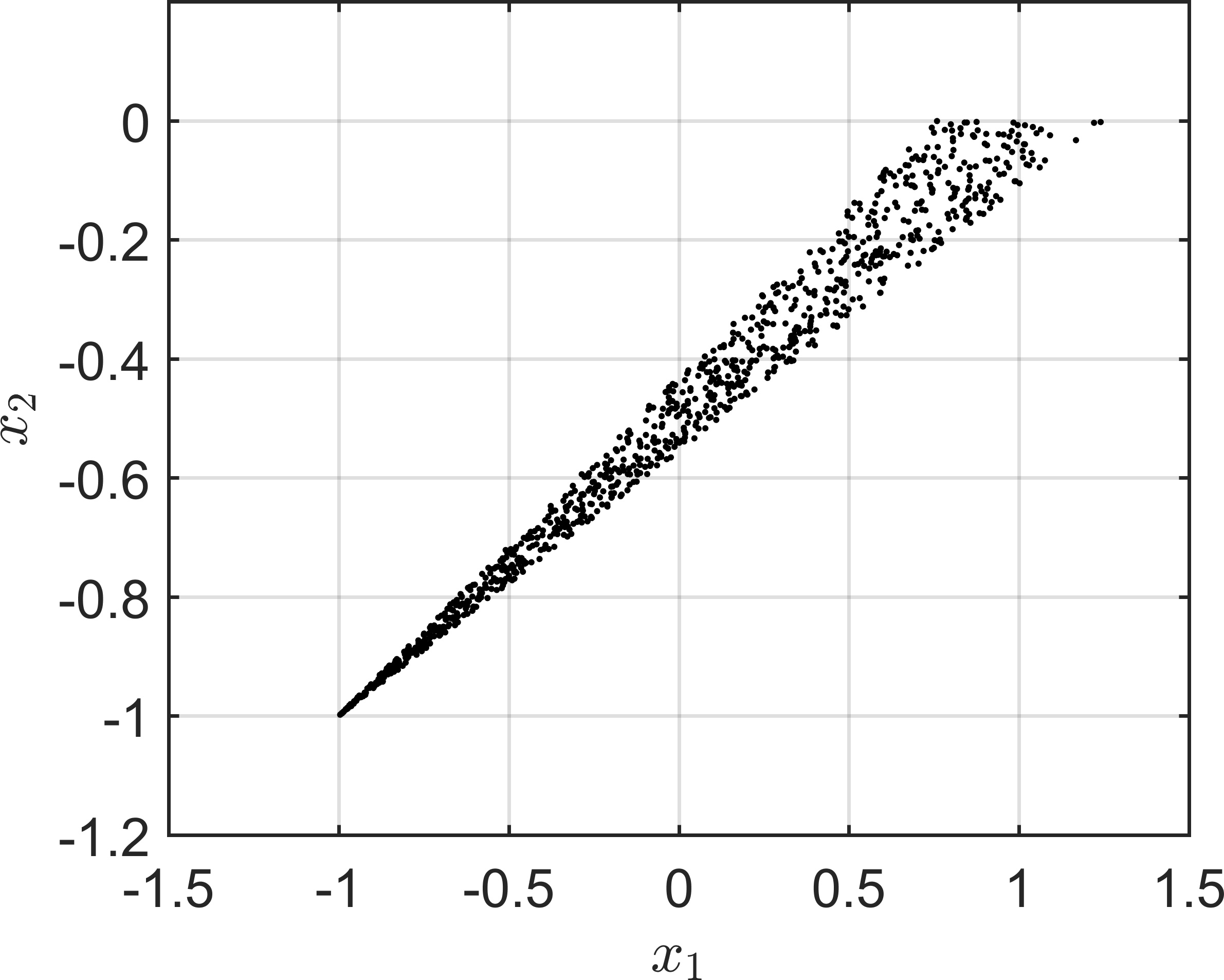}\\[1em]
				\textbf{(a)}
			}
			\parbox[b]{0.32\textwidth}{
				\centering 
				\includegraphics[width=0.32\textwidth]{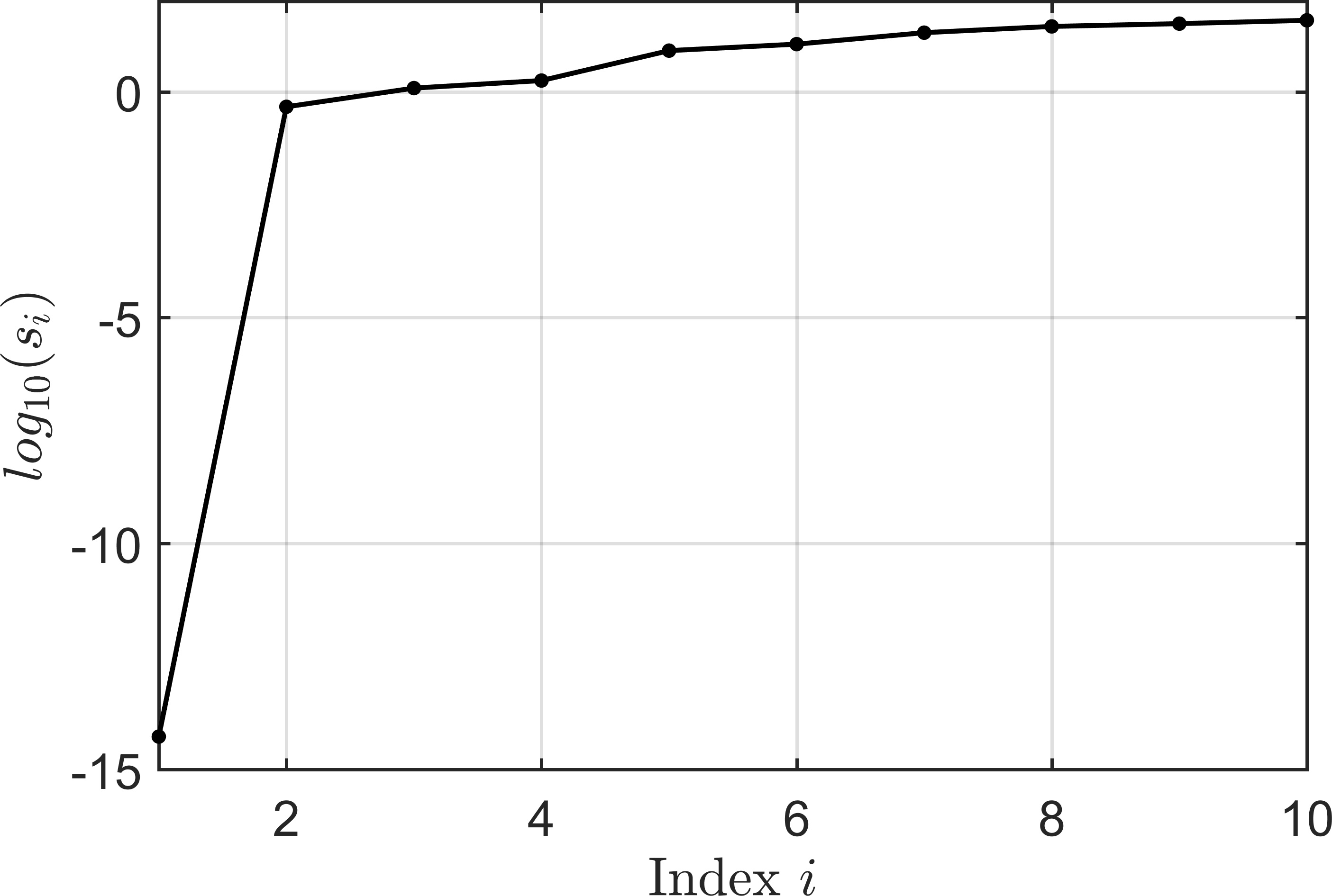}\\[1em]
				\textbf{(b)}
			}
			\parbox[b]{0.32\textwidth}{
				\centering 
				\includegraphics[width=.32\textwidth]{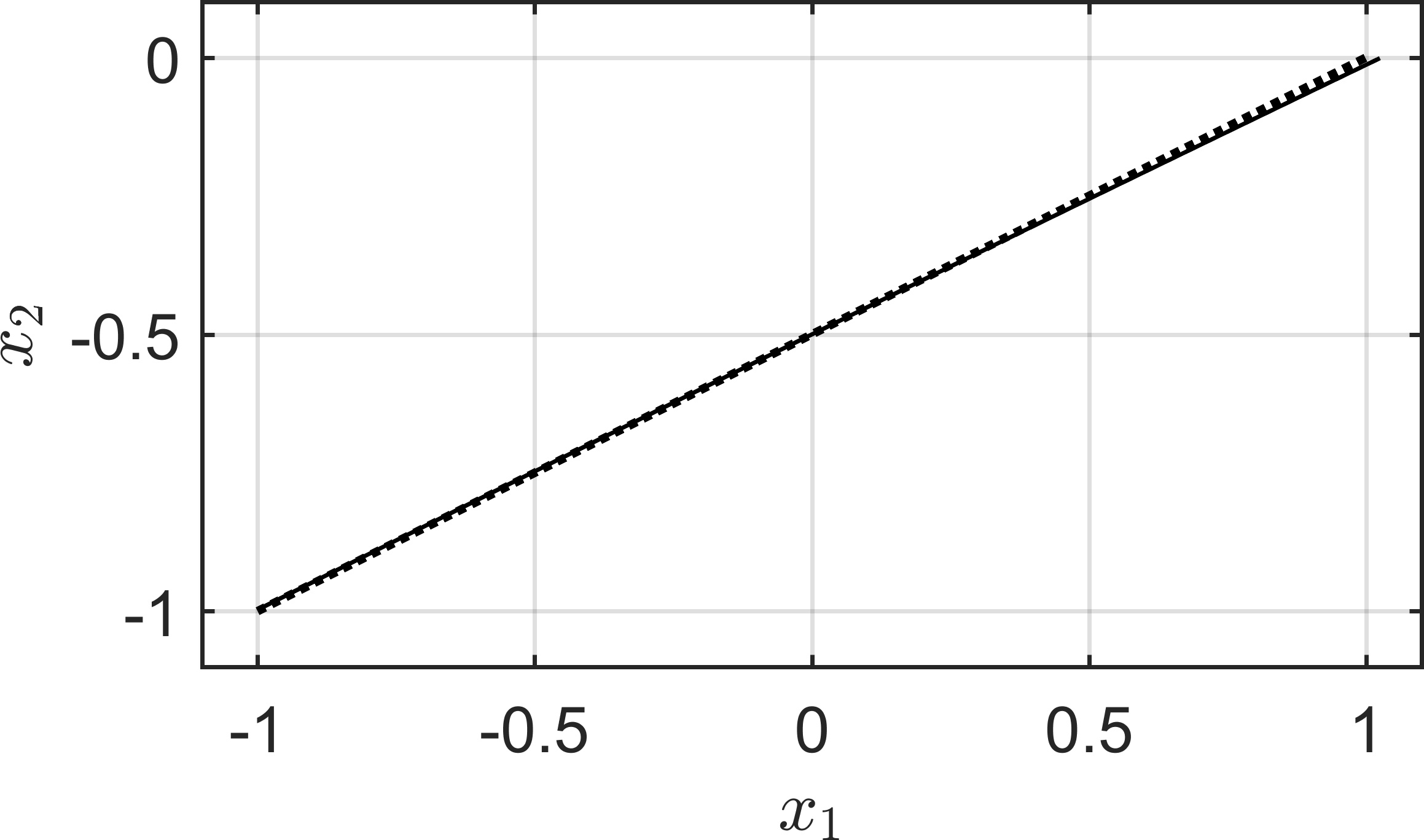}\\[1em]
				\textbf{(c)}
			}
			\caption{\textbf{(a)} Approximation of the solution of \eqref{eq:SMOP} via smoothing Chebyshev scalarization with $1000$ points for Example \ref{example:stochastic}. \textbf{(b)} Singular values of $\L$. \textbf{(c)} Pareto critical set of the original problem (dotted line) and the approximation (solid line).}
			\label{fig:example_stochastic_data}
		\end{figure}
		Since there is no noise in the first component of $f$, the approximation is relatively accurate close to $a$ and becomes worse when moving towards $(1,0)^\top$.
		
		We now interpret the points in Figure \ref{fig:example_stochastic_data}(a) as the data set $\D_x$. The KKT vector $\alpha \in \D_\alpha$ corresponding to $x \in \D_x$ is chosen as $\alpha = (-x_2, 1 + x_2)^\top$. 
		We choose $\B$ as the monomials up to degree $2$, i.e., 
		\begin{equation*}
			\B := \{ x_1, x_1^2, x_2, x_1 x_2, x_2^2 \}.
		\end{equation*}
		In this basis, the objective vector $F$ of \eqref{eq:SMOP} can be represented exactly (up to the constants in both components) by the coefficient vector
		\begin{equation} \label{eq:example_stochastic_exact_coefficients}
			\bar{c} = (2,1,2,0,1,-2,1,0,0,1)^\top.
		\end{equation}
		When applying Algorithm \ref{algo:MOP_from_data}, we obtain the singular values shown in Figure \ref{fig:example_stochastic_data}(b). The objective vector $x \mapsto (2 x_2 + x_2^2, x_2^2)^\top$ corresponding to the smallest singular value $s_1 = 5.33 \cdot 10^{-15}$ is degenerate due to the missing dependency on $x_1$. The next smallest singular values are
		\begin{align*}
			s_2 = 0.4654, \\
			s_3 = 1.2076, \\
			s_4 = 1.7744, \\
			s_5 = 8.1572.
		\end{align*}
		Hence, due to the gap from $s_4$ to $s_5$, we choose $\bar{s} = s_4$, i.e., $I = \{1,2,3,4\}$, in step 3. Calculating a (normalized) sparse basis $\{ w_1, w_2, w_3, w_4 \}$ of $\vspan( \{ v_1, v_2, v_3, v_4 \} )$ results in
		\begin{align*}
			w_1 &= (0,0,-1,0,-0.5,0,0,0,0,-0.5)^\top, \\
			w_2 &= (-0.4995, 0.2465, -0.9955, -0.9944, 0, 0, -0.0009, 0, 0.0031, -1)^\top, \\
			w_3 &= (-1, -0.4989, 0, -0.0029, 0, 0.9948, -0.4844, -0.0004, 0, 0.0019)^\top, \\
			w_4 &= (-0.4954, -0.2478, 0, 0, -0.0024, 0, 0.0027, 0.9945, -0.9837, 1)^\top,
		\end{align*}
		which shows that in step 4, we can choose
		\begin{equation*}
			c^* := -2 w_1 - 2 w_3 = (2, 0.9977, 2, 0.0058, 1, -1.9896, 0.9687, 0.0007, 0, 0.9963)^\top,
		\end{equation*}				
		which is close to $\bar{c}$ (cf. \eqref{eq:example_stochastic_exact_coefficients}). The Pareto critical set of the corresponding objective vector $\F(c^*)$ is shown in Figure \ref{fig:example_stochastic_data}(c). A numerical approximation of the Hausdorff distance between the two sets (using a pointwise discretization) yields $2.5 \cdot 10^{-2}$ (and the corresponding points of maximal distance are located close to $(1,0)^\top$). As functions, comparing $F$ and $\F(c^*)$ (up to constants) around the Pareto critical set yields
		\begin{equation*}
			\max_{x \in [-1.1,1.1] \times [-1.1,0.1]} \| (F(x) - (2, 4/3)^\top) - \F(c^*)(x) \|_\infty \approx 5.46 \cdot 10^{-2},
		\end{equation*}
		showing that we were able to construct a very good approximation of the objective vector $F$ from noisy data.
	\end{example}
	
	\section{Application 2: Generation of surrogate models of expensive MOPs} \label{sec:surrogates}	
	In this section, we will use the results from Section \ref{sec:generating_mops_from_data} for the generation of surrogate models for MOPs with an objective vector $f^e$ that is known but very costly to evaluate. This scenario occurs frequently for complex physics simulations, e.g., when the system under consideration is described by a partial differential equation, cf. \cite{LKBM05,ARFL09,POBD18,BDPV18} for examples. Here, while it is often possible to calculate single Pareto critical points, the computation of the full Pareto critical set via a fine pointwise approximation is computationally infeasible. In this situation, we will use Pareto critical points of the expensive model $f^e$ and their corresponding KKT vectors as data points. Our goal is to find an MOP whose extended Pareto critical set is as close as possible to the extended Pareto critical set of $f^e$ while using as few data points as possible.
	
	Surrogate modeling is a very active area of research and has been used extensively for simulation and optimization, see \cite{SVR05,BGW15} for overviews. In recent years, surrogate models have also attracted interest in the multiobjective optimization community. All methods proposed so far have the common goal of finding a surrogate model for the objective function $f^e$, for instance by polynomial regression (cf., e.g., \cite{CSHM2017,VK2010}). Consequently, the surrogate model will possess a dominance relation similar to the original function and as a result, dominance-based methods like evolutionary algorithms can be applied. In contrast to this, our approach constructs surrogate models which resemble the original KKT conditions. This means that we (in general) do not obtain a surrogate model for the objective function but for the first-order optimality condition such that KKT-based methods like continuation \cite{SDD2005} can be used.
	
	When ``fitting'' a surrogate model to a data set of limited size as in this case, it is important avoid \textit{underfitting} and \textit{overfitting}. These terms are common in statistics and machine learning, but they apply here in a similar fashion. In general, underfitting means that the chosen model is not able to capture all structures that are present in the data set. In our context, this means that we chose an unsuited (e.g., too small) set of basis functions. When using monomials as basis functions, one can try to circumvent this by using a higher maximal degree (as in Example \ref{example:3_con_comp}). On the other hand, overfitting means that the model captures structures in the data set that were caused by noise and are highly dependent on the data used for fitting the model. In our context, this happens when the number of basis functions $d$ in $\B$ is too large. A necessary condition to circumvent this is to ensure that $n \cdot N \geq k \cdot d$, i.e., 
	\begin{equation} \label{eq:condition_overfitting}
		d \leq \frac{n \cdot N}{k}.
	\end{equation}		
	As discussed in Section \ref{sec:generating_mops_from_data}, if this condition does not hold then we always find an objective vector in the chosen basis where the data points are exactly extended Pareto critical. Thus, if \eqref{eq:condition_overfitting} is violated, overfitting is unavoidable (as long as we are not working with exact data).
	
	To illustrate the behavior of our method, we begin with an example where the objective vector is cheap to evaluate and we already know the solution of the MOP. We consider the problem $L \& H_{2 \times 2}$ from \cite{HL2015}, where the objective vector is non-polynomial and has a complex (extended) Pareto critical set.
	
	\begin{example} \label{example:LH_2}
		Consider the MOP
		\begin{align} \label{eq:LH_2}
			&\min_{x \in \R^2} f^e(x) \tag{$L \& H_{2 \times 2}$} \\
			s.t. \quad & x \in [-0.75, 0.75] \times [-2.5,0.12] \nonumber
		\end{align}
		for
		\begin{equation*}
			f^e(x) := - \begin{pmatrix}
			\frac{\sqrt{2}}{2} x_1 +\frac{\sqrt{2}}{2} b(x) \\
			-\frac{\sqrt{2}}{2} x_1 + \frac{\sqrt{2}}{2} b(x)
		\end{pmatrix}
		\end{equation*}
		with
		\begin{align*}
			&b(x) := 0.2 g(x,(0,0)^\top,0.65) + 1.5 g(x,(0,-1.5)^\top,2.8), \\
			&g(x,p_0,\sigma) := \sqrt{\frac{2 \pi}{\sigma}} \exp \left( - \frac{\| x - p_0 \|_2^2}{\sigma^2} \right).		
		\end{align*}
		The Pareto critical set of this MOP and its image are shown in Figures \ref{fig:example_LH_2_exact_data}(a) and (b), respectively.
		\begin{figure}[ht] 
			\parbox[b]{0.32\textwidth}{
				\centering 
				\includegraphics[width=0.32\textwidth]{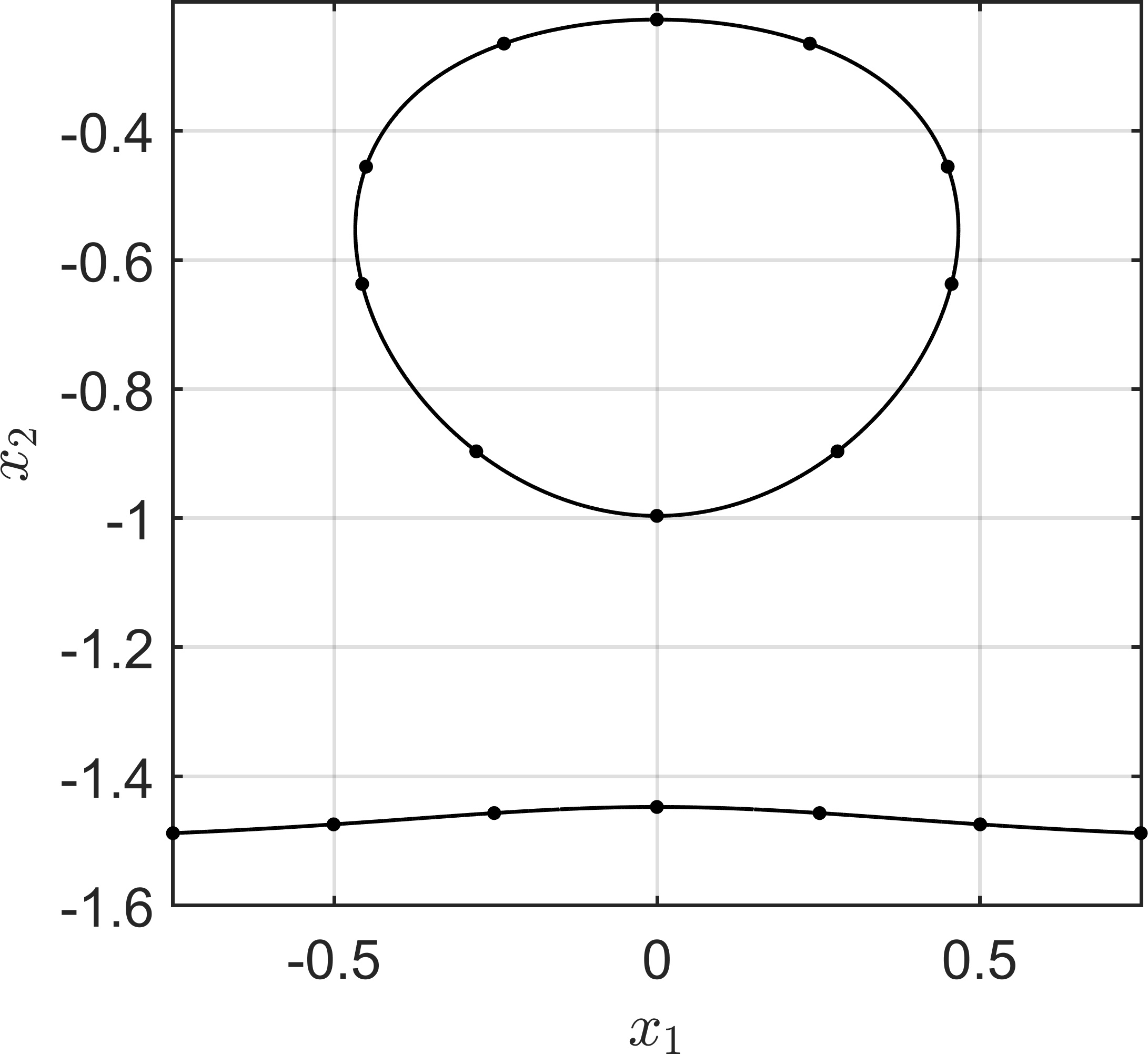}\\[1em]
				\textbf{(a)}
			}
			\parbox[b]{0.32\textwidth}{
				\centering 
				\includegraphics[width=0.32\textwidth]{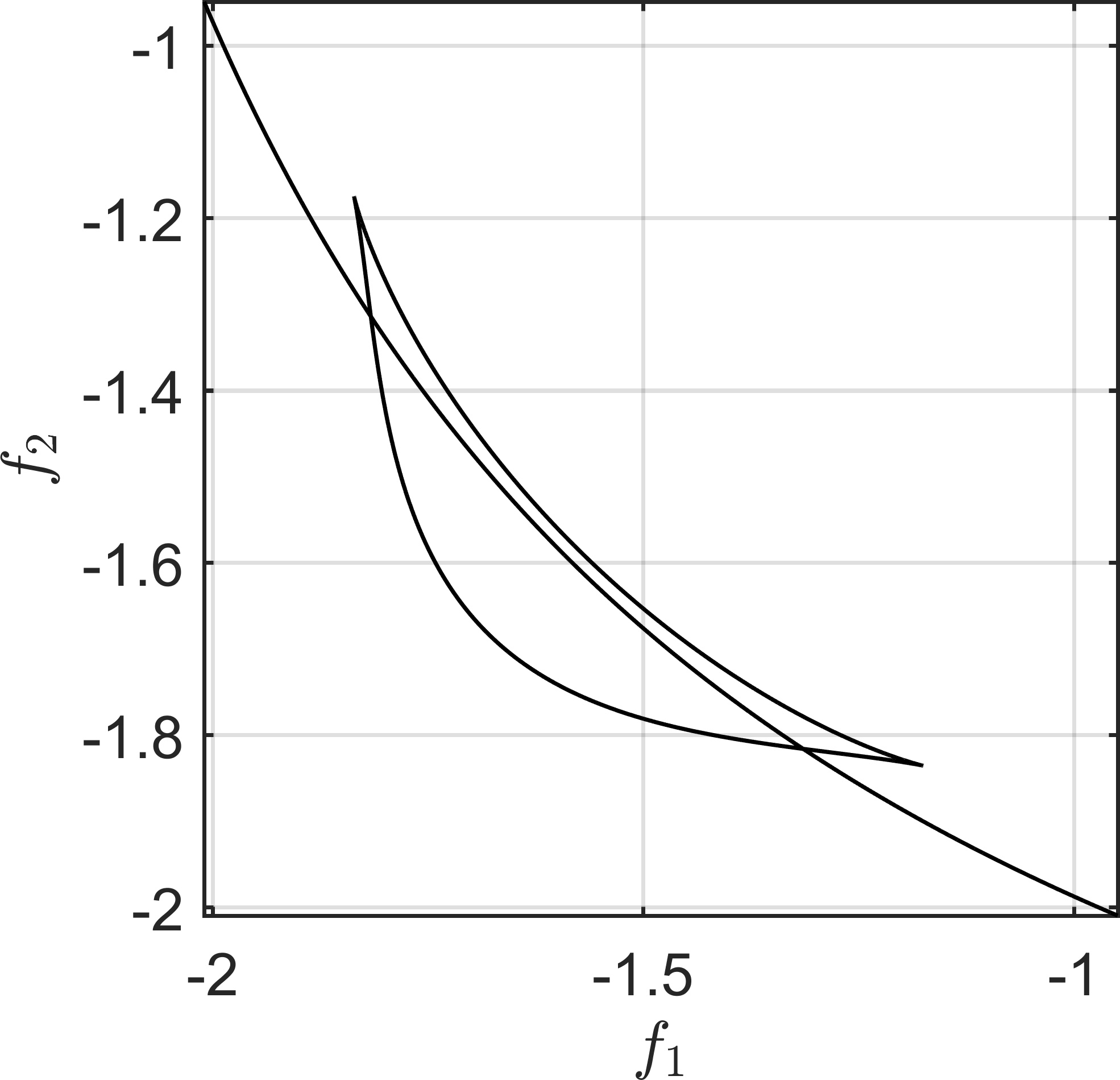}\\[1em]
				\textbf{(b)}
			} \vspace{10pt}
			\parbox[b]{0.32\textwidth}{
				\centering 
				\includegraphics[width=0.32\textwidth]{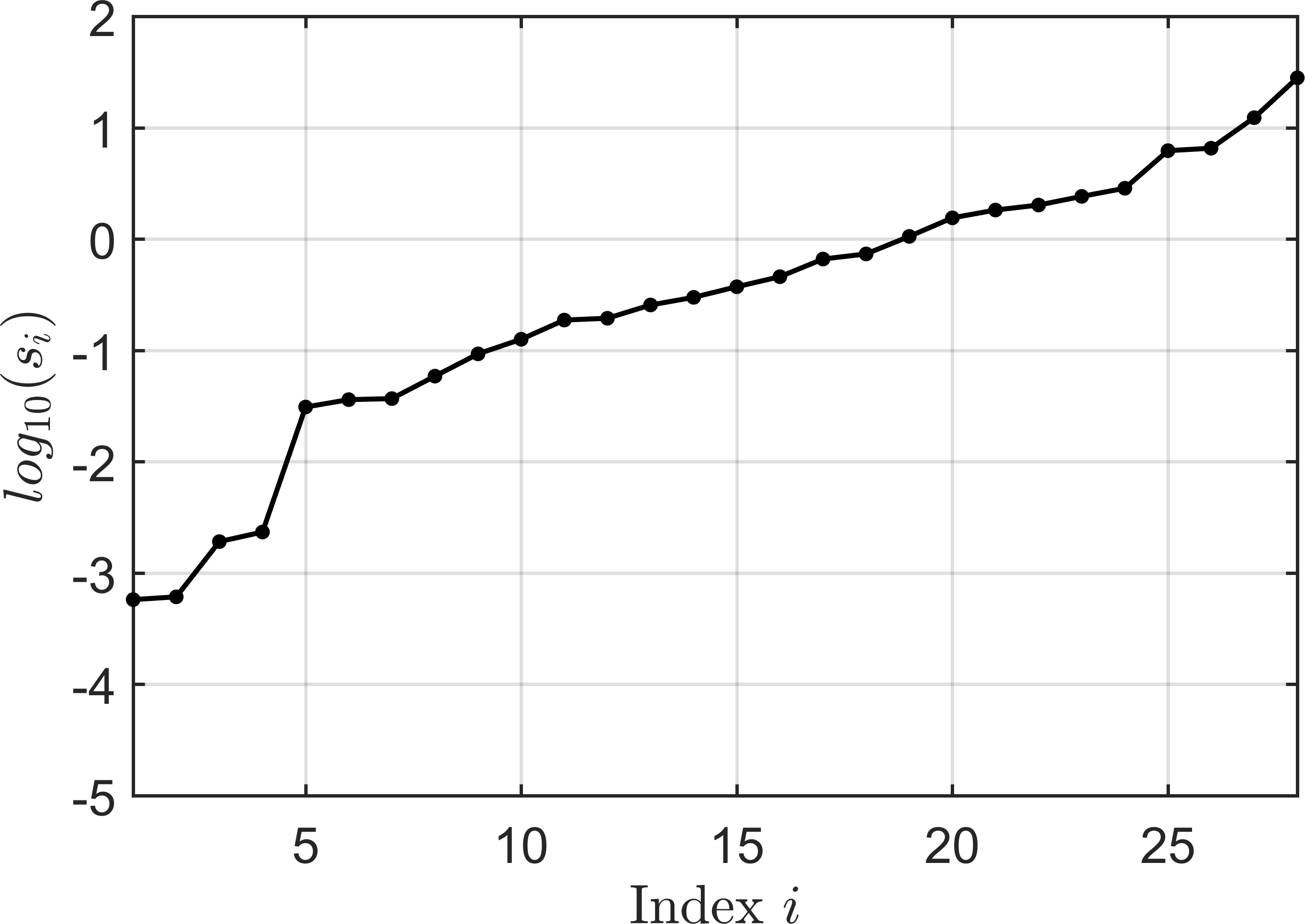}\\[1em]
				\textbf{(c)}
			}
			\caption{\textbf{(a)} Pareto critical set of \eqref{eq:LH_2}. The dots represent the $N = 17$ data points used for the surrogate model construction. \textbf{(b)} Image of the Pareto critical set. \textbf{(c)} Singular values of $\L$ for monomials up to degree $4$ for the chosen data points.}
			\label{fig:example_LH_2_exact_data}
		\end{figure}
		(Note that since all Pareto critical points on the boundary of the feasible set are also Pareto critical for the unconstrained problem, we can consider this problem as unconstrained.) For the surrogate model construction, we choose the $N = 17$ data points depicted in Figure \ref{fig:example_LH_2_exact_data}(a). We choose all monomials up to degree $4$ as basis functions.
		The reason for this choice is that for larger degrees we have $|\B| = d \geq 20$ such that $n \cdot N = 34 < 40 \leq k \cdot d$, which would result in overfitting. 
		
		The surrogate model is now constructed from the data set by applying Algorithm \ref{algo:MOP_from_data}. The singular values of $\L$ are shown in Figure \ref{fig:example_LH_2_exact_data}(c). In steps 3 and 4, we choose the coefficient vector $c = \frac{v_1}{\| v_1 \|_2}$ corresponding to the smallest singular value $s_1 = 5.76 \cdot 10^{-4}$. A comparison between the Pareto critical sets of the corresponding objective vector $f := \F(c)$ and the original objective vector \eqref{eq:LH_2} is shown in Figure \ref{fig:example_LH_2_compare}.
		\begin{figure}[ht] 
			\parbox[b]{0.49\textwidth}{
				\centering 
				\includegraphics[width=0.35\textwidth]{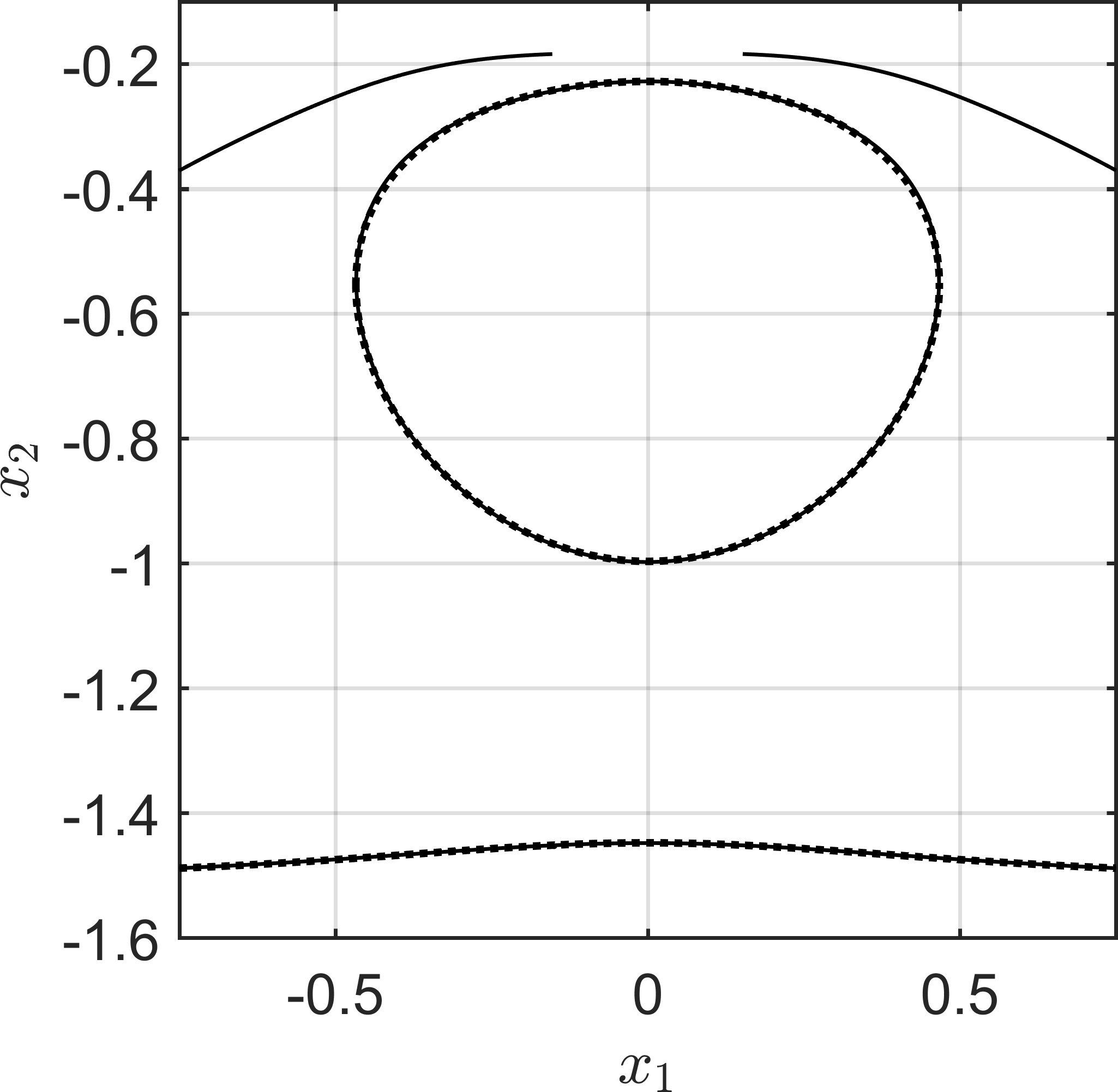}\\
				\textbf{(a)}
			}
			\parbox[b]{0.49\textwidth}{
				\centering 
				\includegraphics[width=0.35\textwidth]{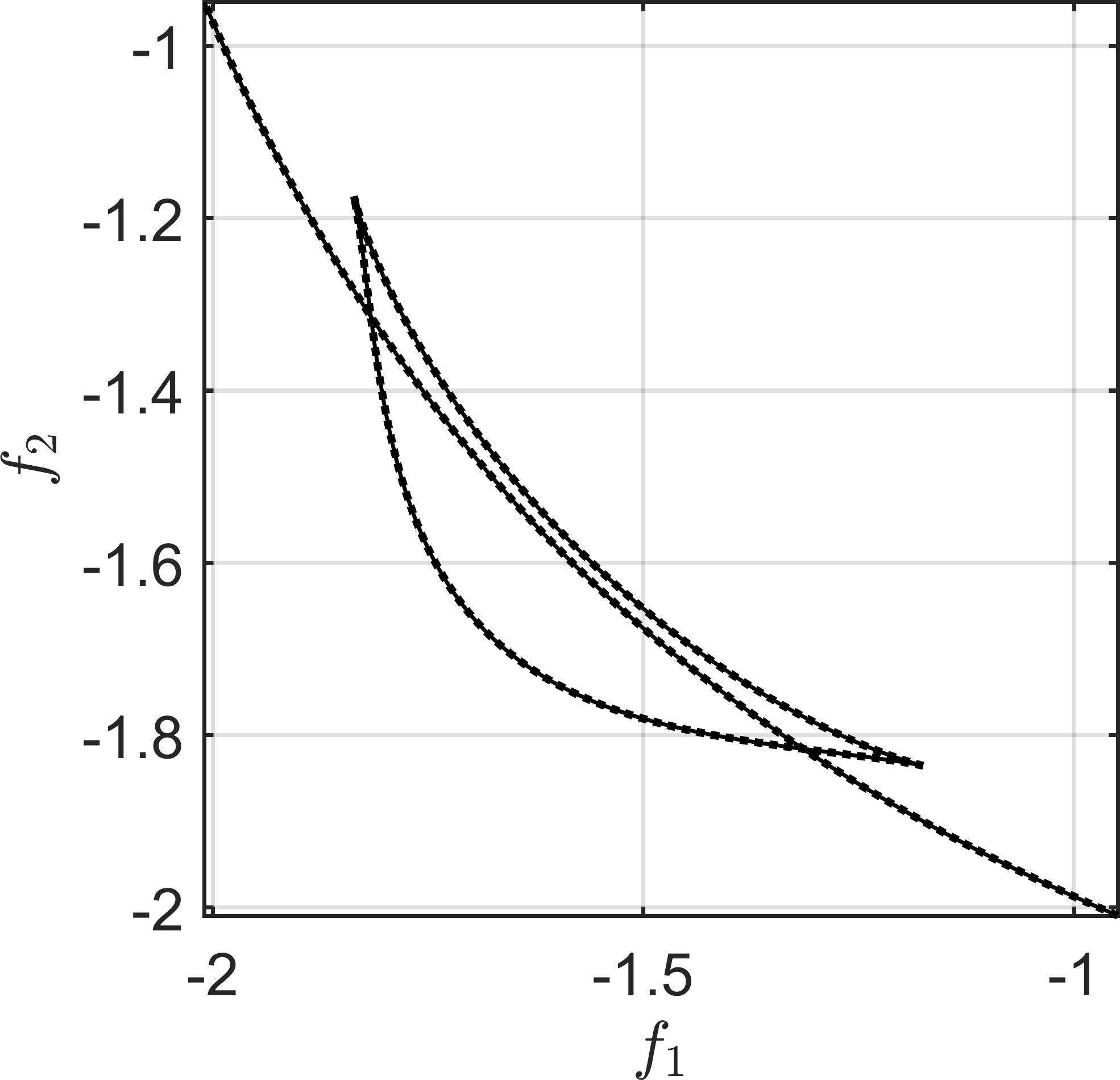}\\
				\textbf{(b)}
			}
			\caption{\textbf{(a)} The Pareto critical sets of \eqref{eq:LH_2} (dotted line) and its approximation $f$ (solid line). \textbf{(b)} The images of the Pareto critical sets of \eqref{eq:LH_2} (dotted line) and $f$ under $f^e$ (solid line).}
			\label{fig:example_LH_2_compare}
		\end{figure}
		We see in (a) that the Pareto critical sets are almost identical besides the two additional connected components at the top. After filtering these out (e.g., by applying clustering algorithms, cf. \cite{S2007}), the Hausdorff distance between the two sets is $4 \cdot 10^{-3}$. Figure \ref{fig:example_LH_2_compare}(b) shows the image of the Pareto critical set of $f$ under the original objective vector $f^e$ without the additional connected components. Similar to the decision space, the Pareto fronts are almost identical with a Hausdorff distance of $1.6 \cdot 10^{-3}$.
	\end{example}
	
	The previous example shows that few data points of the original objective vector can suffice to generate a good surrogate model, even if the original objective vector does not lie in the span of the chosen basis functions. In order to highlight the potential for increased efficiency in real-world applications, our next example considers an MOP where the evaluation of the objective vector is very expensive.
	
	\begin{example} \label{example:expensive}
		In this example, we consider the flow around a cylinder governed by the 2D incompressible Navier--Stokes equations at a Reynolds number of 100, where the goal is to influence the flow field by rotating the cylinder (cf. Figure \ref{fig:example_expensive_WS_result}(a)):
		\begin{equation} \label{eq:NSE} \tag{NSE}
		\begin{aligned}
			\dot{y}(x,t) + y(x,t) \cdot \nabla y(x,t) &= \nabla p(x,t) + \frac{1}{Re} \Delta y(x,t), \\
			\nabla \cdot y(x,t) &= 0, \\
			y(x,0) &= y^0(x).
		\end{aligned}
		\end{equation}
		Here, $y$ is the flow velocity and $p$ is the pressure. For the non-rotating cylinder, the well-known \emph{von K\'{a}rm\'{a}n vortex street} occurs. This is a periodic solution where vortices detach alternatingly from the upper and lower edge of the cylinder, respectively. 
		This setup is a classical problem from flow control which has been studied extensively in the literature both using direct approaches as well as surrogate models, see \cite{POBD18} and the references therein. The classical goal is to stabilize the flow, i.e., to minimize the vertical velocity. This can be associated with minimizing the vertical force on the cylinder, the \emph{lift} $C_L$. As a second goal, we want to minimize the control effort, which results in the following multiobjective optimal control problem:
		\begin{equation} \label{eq:example_expensive_MOP1}
			\begin{aligned}
			\min_{u \in L^2([t_0,t_e], \R)} &\left( \begin{array}{c} \int_{t_0}^{t_e} C_L^2(t) \, dt \\ \int_{t_0}^{t_e} u^2(t) \, dt \end{array} \right) \\
			\mbox{s.t.} \qquad &\eqref{eq:NSE}.
			\end{aligned}
		\end{equation}
		By introducing a sinusoidal control $u(t) = x_1 sin(2 \pi x_2\, t)$ and assuming that the control-to-state mapping is injective, Problem \eqref{eq:example_expensive_MOP1} can be transformed into the MOP
		\begin{equation} \label{eq:example_expensive_MOP}
			\min_{x \in \R^2} f^e(x) \ \text{ with } \
			f^e(x) := \begin{pmatrix}
					\int_{t_0}^{t_e} C_L^2(t) \, dt \\
					\int_{t_0}^{t_e} (x_1 sin(2 \pi x_2\, t))^2 \, dt 
				\end{pmatrix}.
		\end{equation}	
		Since the Navier--Stokes equations are a system of nonlinear partial differential equations, we have to introduce a spatial discretization (here via the finite volume method) with $22,000$ cells, which results in $66,000$ degrees of freedom at each time instant. Consequently, it is infeasible to accurately solve Problem \eqref{eq:example_expensive_MOP} directly, regardless of the used method. 
	
		One way to approach this problem is to introduce a surrogate model for the system dynamics \eqref{eq:NSE}, for instance via \emph{Proper Orthogonal Decomposition} \cite{POBD18}. In contrast to this, here, we directly construct a surrogate model for the MOP \eqref{eq:example_expensive_MOP} instead of the system dynamics. In order to generate the required data points $\D$, we apply scalarization via the well-known Weighting Method (i.e., $\min w_1 f^e_1 + w_2 f^e_2$, cf. \cite{M1998}) to \eqref{eq:example_expensive_MOP} with varying weights
		\begin{equation} \label{eq:example_expensive_weights}
			w^i = \left( \frac{i - 1}{25}, 1 - \frac{i - 1}{25} \right)^\top, \quad i \in \{ 1, ..., 26 \}.
		\end{equation}				
		An advantage of this method is that we directly obtain the KKT vectors of the resulting Pareto optimal points as the corresponding weights that were used to calculate them. Since there are convergence issues for $i \in \{10,...,16\}$ using the weighted sum (likely due to a large number of local minima, which is a known problem), we will exclude these points from our data set. The remaining $19$ points are shown in Figures \ref{fig:example_expensive_WS_result}(b) and (c).
		\begin{figure}[ht] 
			\parbox[b]{0.32\textwidth}{
				\centering 
				\includegraphics[width=0.32\textwidth]{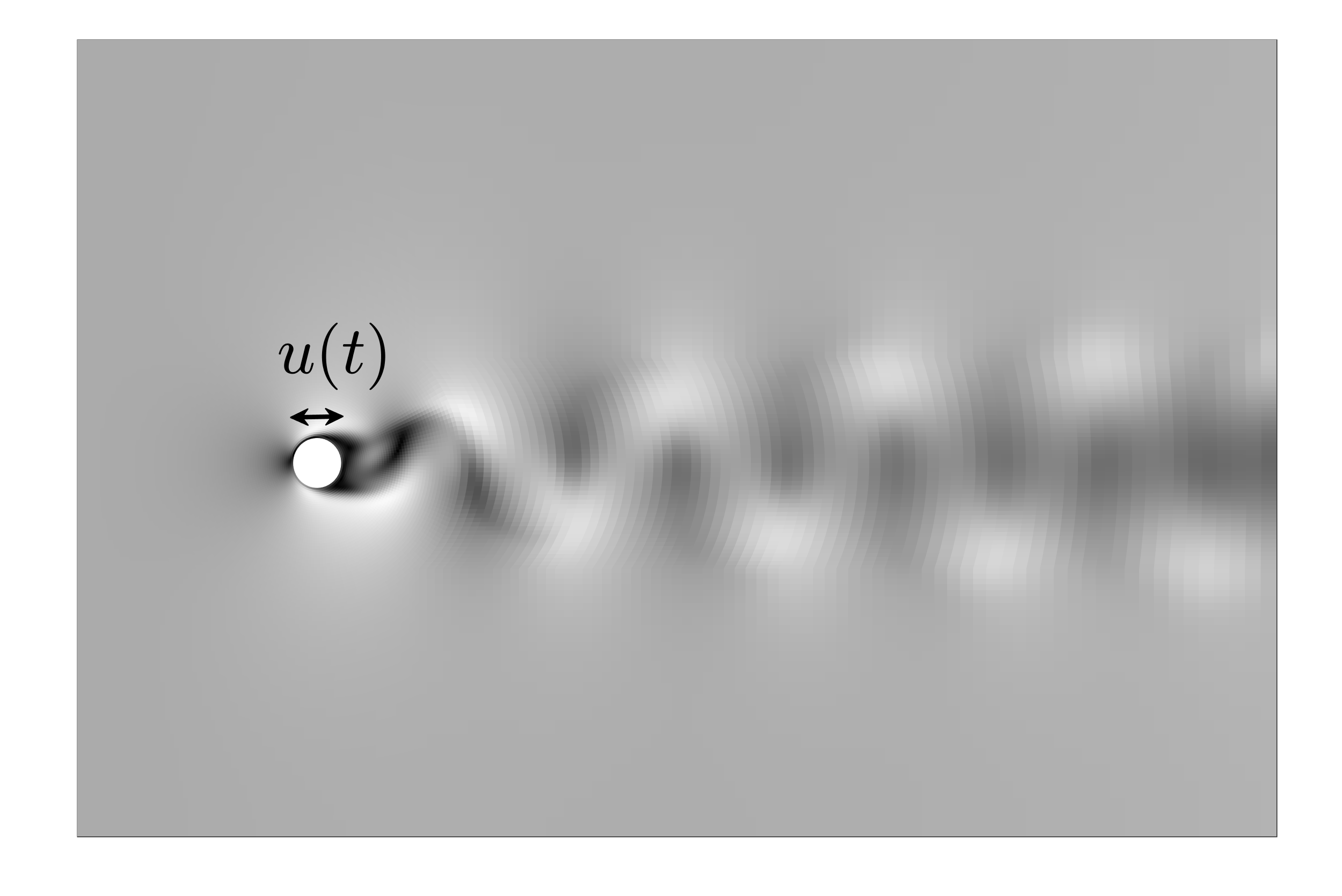}\\[1em]
				\textbf{(a)}
			}
			\parbox[b]{0.32\textwidth}{
				\centering 
				\includegraphics[width=0.32\textwidth]{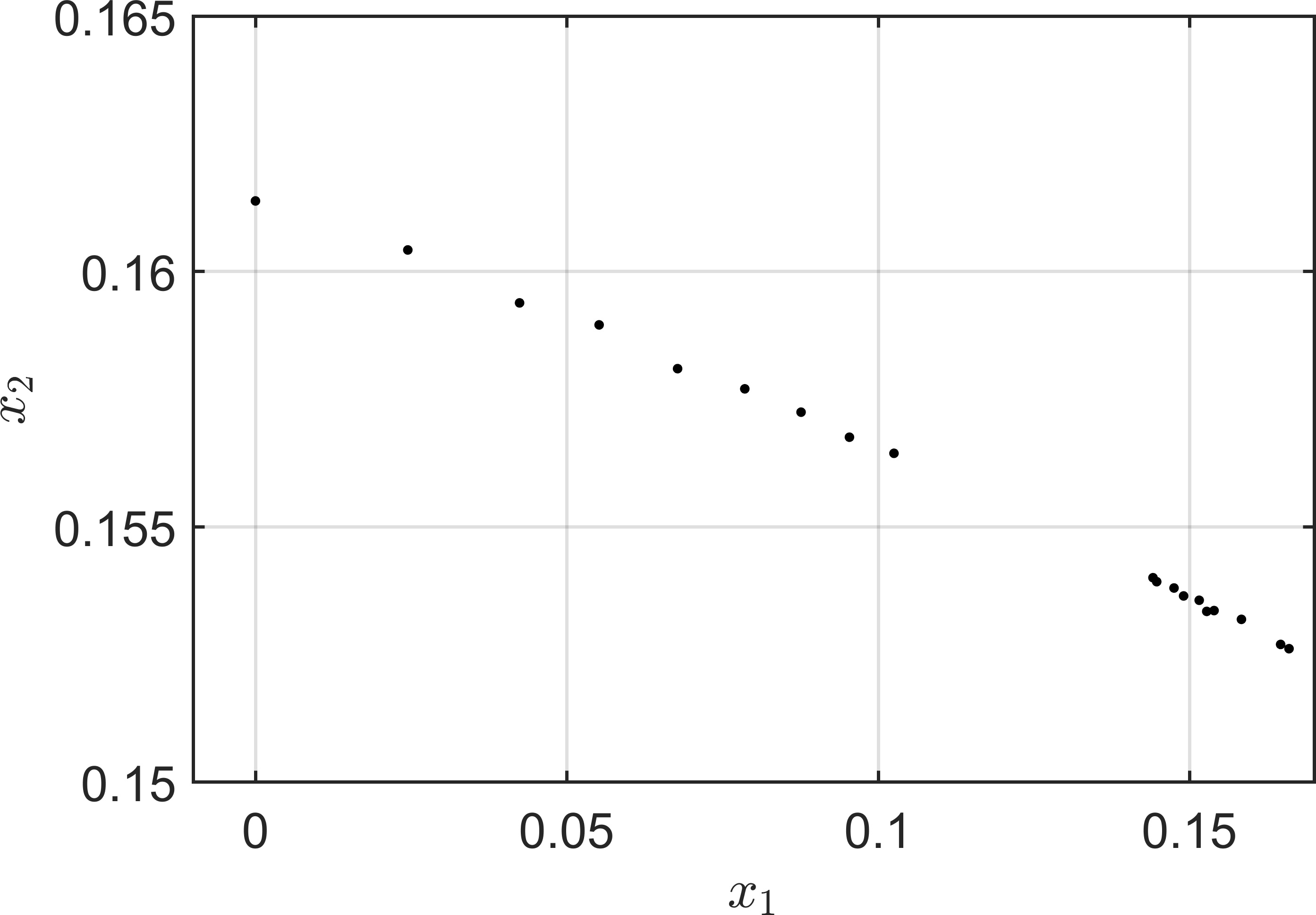}\\[1em]
				\textbf{(b)}
			}
			\parbox[b]{0.32\textwidth}{
				\centering 
				\includegraphics[width=0.32\textwidth]{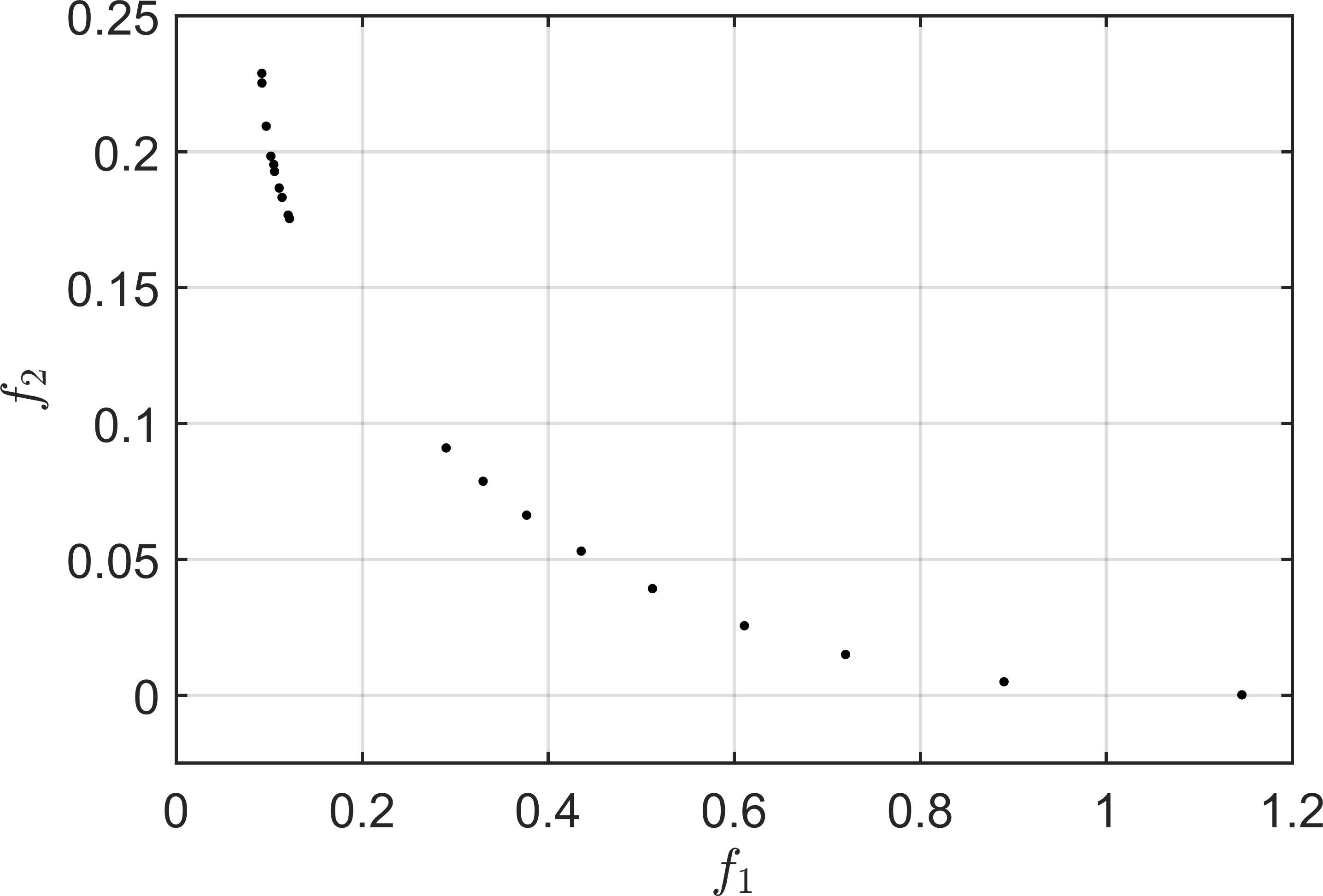}\\[1em]
				\textbf{(c)}
			}
			\caption{\textbf{(a)} Flow around a cylinder, controlled via cylinder rotation. \textbf{(b)} Result of the Weighting Method applied to the MOP \eqref{eq:example_expensive_MOP} in the variable space. \textbf{(c)} Image of the resulting points under the objective function \eqref{eq:example_expensive_MOP}.}
			\label{fig:example_expensive_WS_result}
		\end{figure}
	
		Considering $\D_x$ and $\D_\alpha$, it appears that the Pareto set consists of a single one-dimensional connected component whose corresponding KKT vectors are monotonically increasing and decreasing in their first and second component, respectively. Due to this simple structure, we take all monomials up to degree $2$ as our set of basis functions. The singular values of the resulting $\L \in \R^{38 \times 10}$ are shown in Figure \ref{fig:example_expensive_singular_values}(a).	
		\begin{figure} 
			\parbox[b]{0.49\textwidth}{
				\centering
				\includegraphics[width=.4\textwidth]{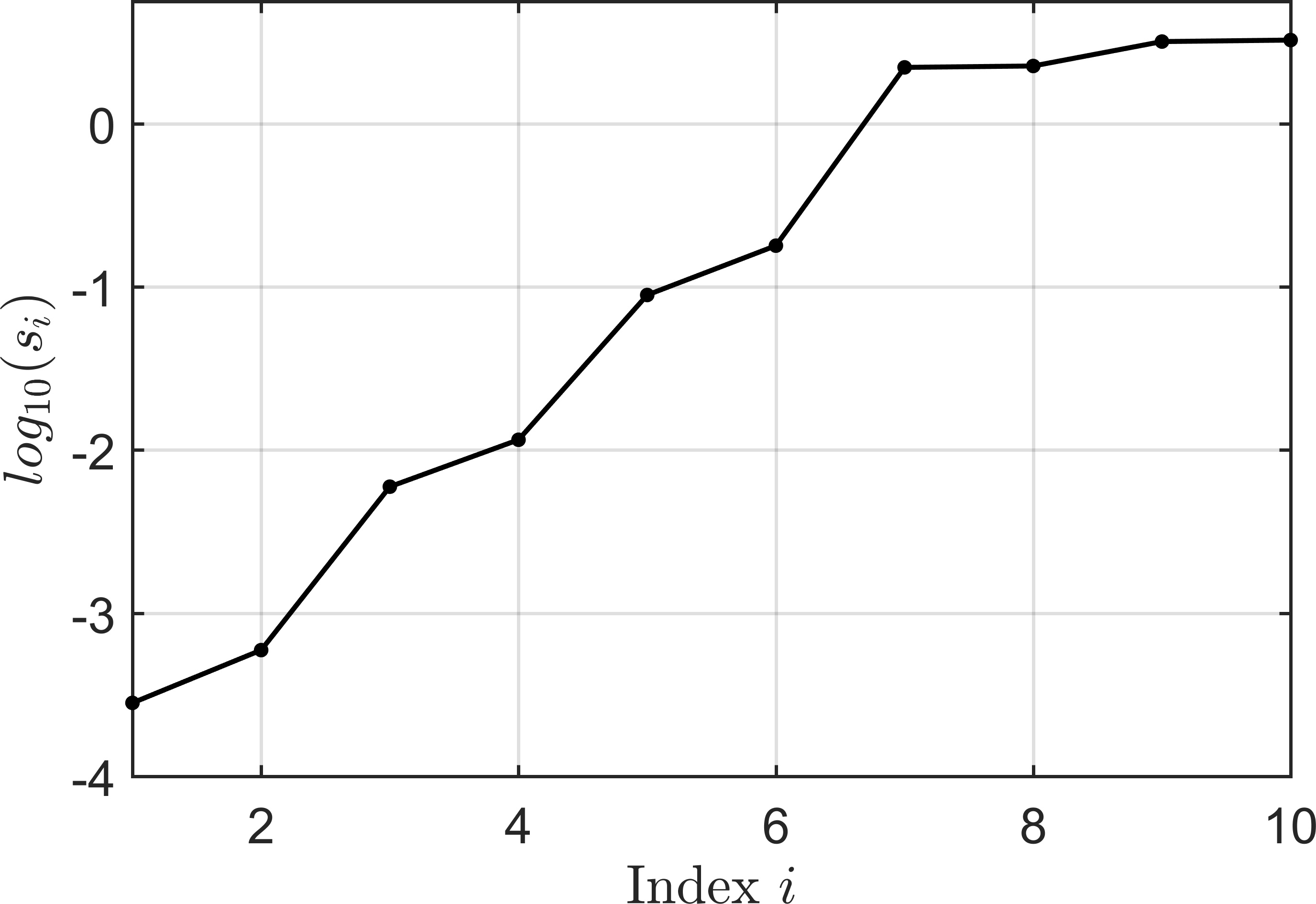}\\[1em]
				\textbf{(a)}
			}
			\parbox[b]{0.49\textwidth}{
				\centering 
				\includegraphics[width=0.4\textwidth]{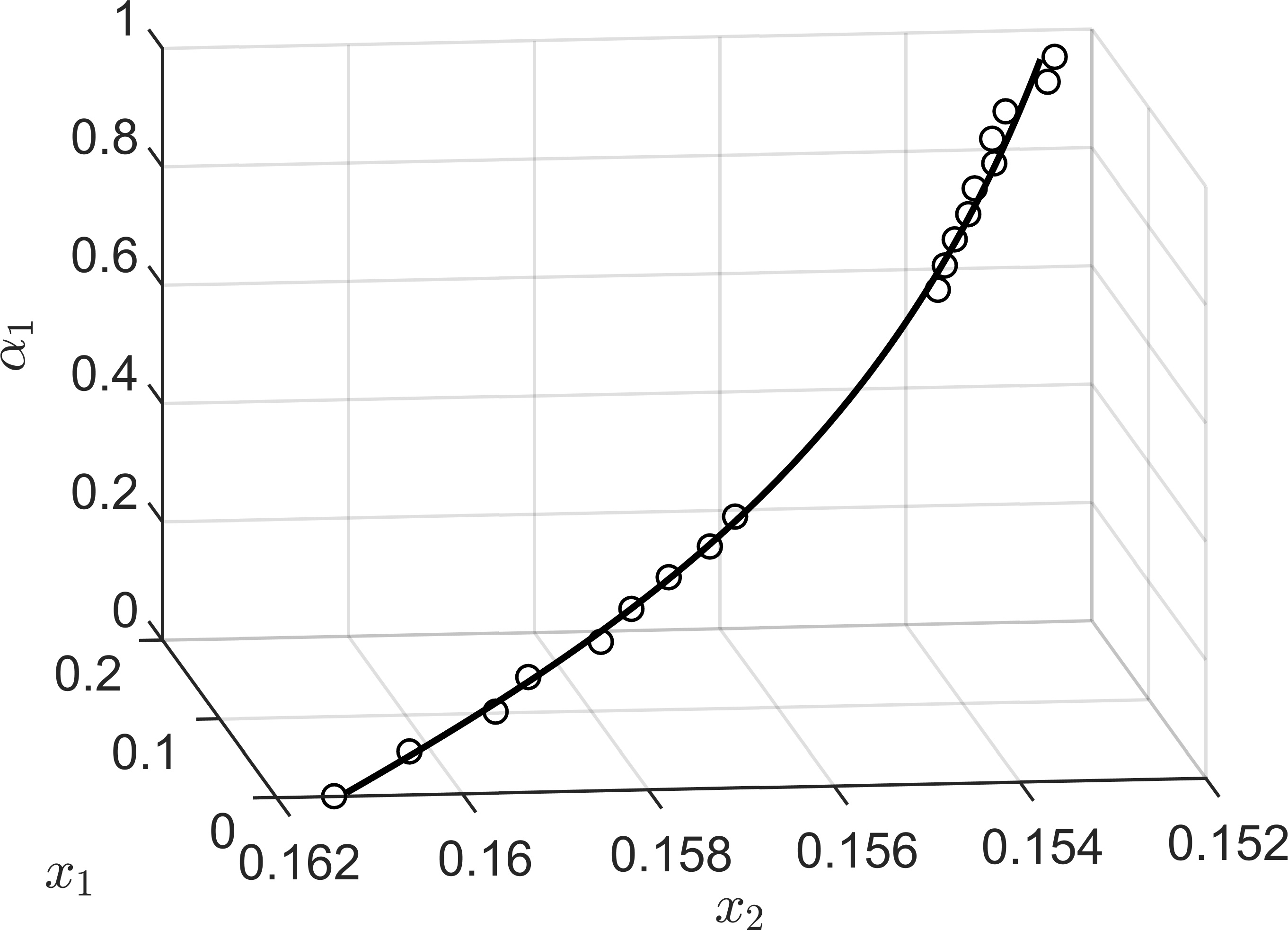}\\[1em]
				\textbf{(b)}
			}
			\caption{\textbf{(a)} Singular values of $\L$ in Example \ref{example:expensive}. \textbf{(b)} Pareto critical set of \eqref{eq:example_expensive_surrogate_model} (solid line) and the data in $\D_x$ (circles), where the first KKT multiplier $\alpha_1$ is shown in the third dimension.}
			\label{fig:example_expensive_singular_values}
		\end{figure}
		The smallest singular values $s_1 = 2.82 \cdot 10^{-4}$ and $s_2 = 5.95 \cdot 10^{-4}$ correspond to objective vectors where the influence of $x_1$ is relatively small. (In particular, the hessian matrices of both objective functions in both objective vectors are almost singular.) Therefore, the corresponding Pareto critical sets are degenerate similar to the objective vector \eqref{eq:example_circle_degenerated_objective} in Example \ref{example:unit_circle}. Due to this, we instead consider the objective vector corresponding to the third singular value $s_3 = 5.96 \cdot 10^{-3}$ as our surrogate model, given by
		\begin{equation} \label{eq:example_expensive_surrogate_model}
			f(x) = \begin{pmatrix}
			- 0.0519 x_1^2 - 0.9285 x_1 x_2 + 0.1588 x_1 + 0.1542 x_2^2 + 0.1046 x_2 \\
			- 0.0136 x_1^2 - 0.2704 x_1 x_2 + 0.0437 x_1 + 0.0054 x_2^2 - 0.0008 x_2
			\end{pmatrix}.
		\end{equation}
		A projection of the corresponding extended Pareto critical set is depicted in Figure \ref{fig:example_expensive_singular_values}(b), showing that all data points are close to the solution of the surrogate problem. In order to obtain an approximation of the Pareto front of the original MOP \eqref{eq:example_expensive_MOP}, we can evaluate the original objective vector $f^e$ in a pointwise discretization of the Pareto critical set of the surrogate model $f$. In order to evaluate the performance, we compare our results with the well-known NSGA-II algorithm \cite{DPAM02} (implementation from MATLAB's Global Optimization Toolbox) directly applied to the MOP \eqref{eq:example_expensive_MOP}. The results are depicted in Figure \ref{fig:example_expensive_compare_result}.
		\begin{figure}[ht] 
			\parbox[b]{0.49\textwidth}{
				\centering 
				\includegraphics[width=0.4\textwidth]{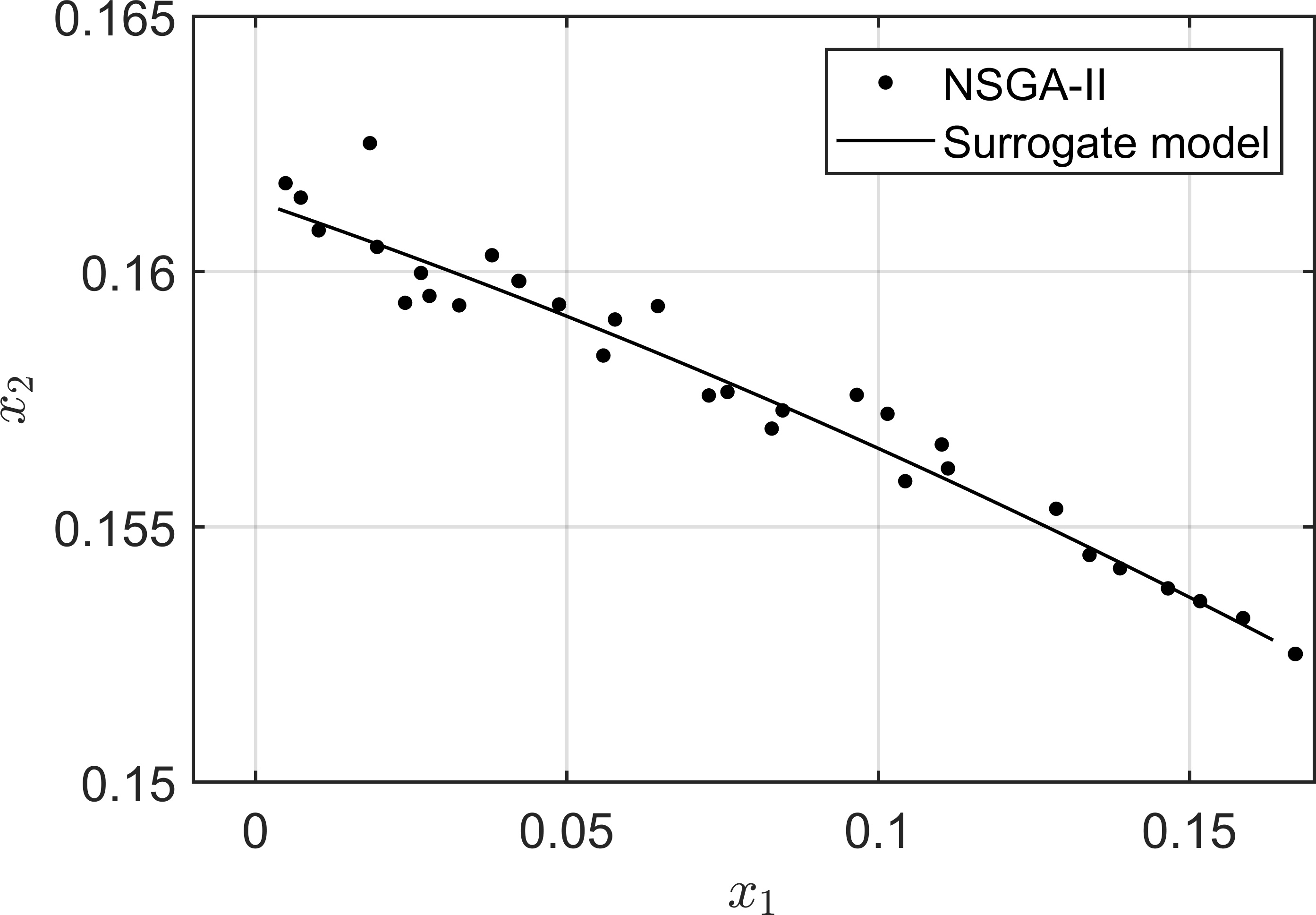}\\
				\textbf{(a)}
			}
			\parbox[b]{0.49\textwidth}{
				\centering 
				\includegraphics[width=0.4\textwidth]{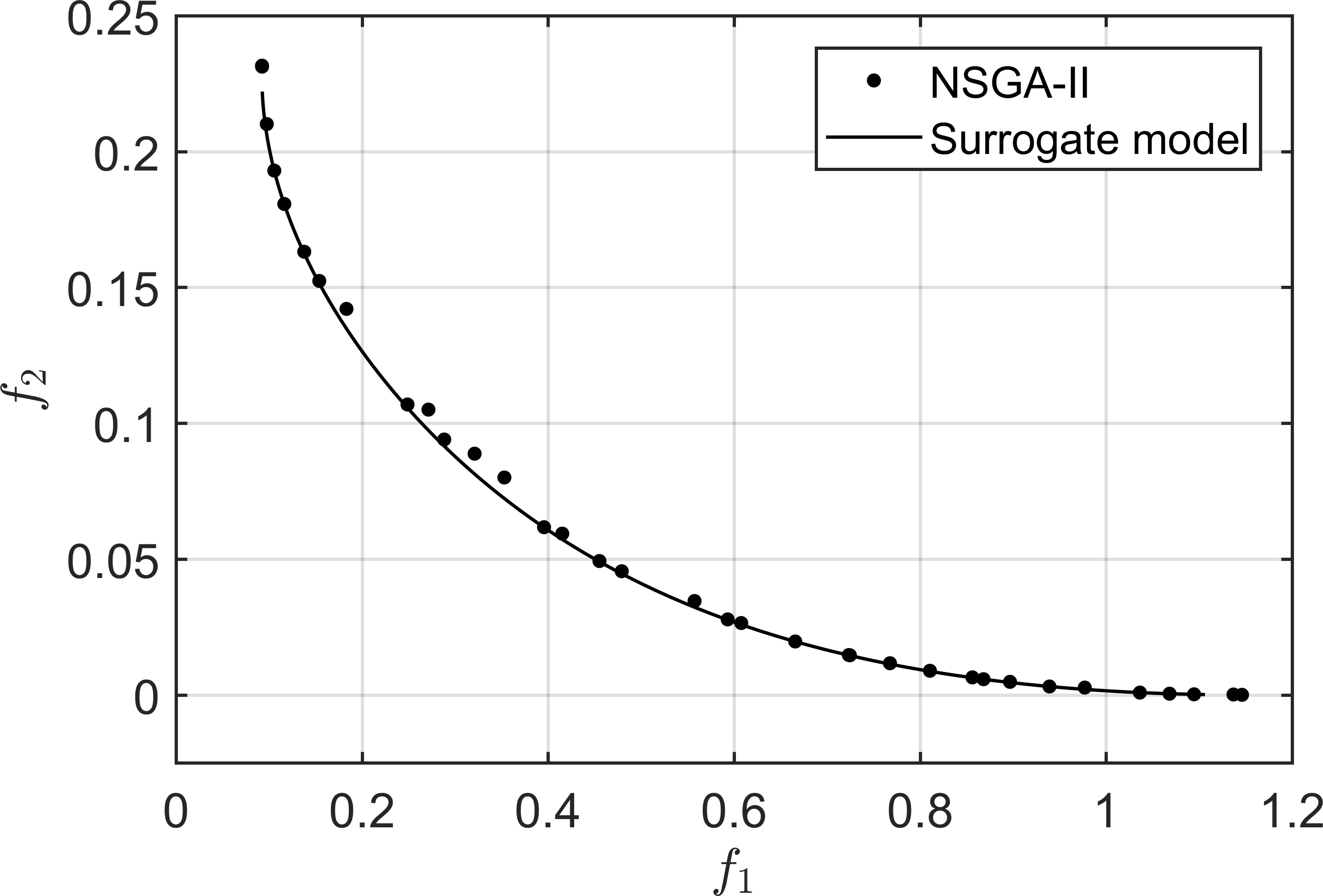}\\
				\textbf{(b)}
			}
			\caption{\textbf{(a)} The approximation of the Pareto set via the surrogate model compared to NSGA-II in Example \ref{example:expensive}. \textbf{(b)} Comparison of the corresponding approximations of the Pareto fronts.}
			\label{fig:example_expensive_compare_result}
		\end{figure}
		Here, we have used an initial population size of $100$ for NSGA-II and a discretization of the Pareto critical set of our surrogate model \eqref{eq:example_expensive_surrogate_model} with $468$ equidistant points. Figure \ref{fig:example_expensive_compare_result} shows that although we only used $19$ data points for the generation of our surrogate model and there was a gap in our data set, we are able to obtain a good approximation of the Pareto set and front in a very efficient manner.
	\end{example}
	
	As already mentioned, it is crucial for our approach to obtain not only Pareto critical points of the given, expensive MOP, but also the corresponding KKT vectors. As mentioned in the previous example, when applying the Weighting Method, the KKT vector of an optimal point is immediately given. This means that no additional effort has to be put into computing the KKT vectors in that case. Similar results can be shown for the $\epsilon$-Constraint Method and the Reference Point Method (cf.~\cite{M1998}), where the KKT vectors can be obtained from the first order optimality condition of the scalar subproblems. For other methods, in particular evolutionary algorithms, we cannot always expect to directly obtain the KKT vectors for free. Given only the Pareto critical (or optimal) point, a straight-forward way to obtain the corresponding KKT vector would be to evaluate the gradients of the objective functions and solve \eqref{eq:KKT} as a linear system in $\alpha$. However, this approach can obviously be very time consuming. Furthermore, knowledge of the derivatives is required. A much cheaper alternative is to exploit the fact that KKT vectors are orthogonal to the linearized Pareto front \cite{H2001}. For a pointwise approximation of the Pareto front, e.g., obtained by NSGA-II, we can use linear regression in each point of the front using only the neighboring points on the front to obtain an approximation of the tangent space of the Pareto front. While this requires a relatively even discretization of the Pareto front, it is much cheaper than assembling and solving the above-mentioned linear system.
	
	\section{Conclusion and outlook} \label{sec:conclusion}
	In this article, we present a way to construct objective vectors of MOPs so that the extended Pareto critical sets contain a given data set. This is realized by considering the $x^*$ and $\alpha^*$ in the KKT conditions \eqref{eq:KKT} as given by the data and then searching for an objective vector $f \in C^1(\R^n,\R^k)$ that satisfies the resulting system of equations. By using a finite set of basis functions $\B \subseteq C^1(\R^n,\R)$, the optimal objective vector can be obtained via singular value decomposition, which results in Algorithm \ref{algo:MOP_from_data}. 
	
	The ability to infer objective vectors from (potentially noisy) data has several powerful applications. In examples, we showed how it can be used to generate test problems for solution methods of MOPs and to approximate the Pareto set and objective vector of stochastic MOPs. Alternatively, the approach can be used to significantly reduce the computational effort for expensive MOPs. Using several data points from the expensive problem, a much cheaper surrogate model can be constructed which can be solved significantly faster.
	
	To the best of the authors' knowledge, this article presents the first approach to deal with the inverse problem of multiobjective optimization (i.e., finding the objective vector to a given Pareto (critical) set) in such a general way. Therefore, there are many aspects that are should be investigated further:
	\begin{itemize}
		\item While the results in Section \ref{sec:generating_mops_from_data} hold for any number of variables $n$ and any number of objectives $k$, all the examples in this article consider the case of $k = 2$ objective functions in $n = 2$ variables. This allowed us to easily visualize the most important features of our approach. Nevertheless, the behavior for higher-dimensional examples is worth investigating. 
		\item For the reasons mentioned at the end of Section \ref{sec:generating_mops_from_data}, we only used monomials up to different maximal degrees as basis functions $\B$. Although this lead to satisfactory results in the examples considered here, there might be more sophisticated choices, in particular if one has some knowledge of the problem structure.
		\item As the KKT conditions \eqref{eq:KKT} can also be formulated for equality and inequality constrained MOPs, we expect that a generalization of our approach to constrained MOPs is possible.
		\item Since we can currently only influence the Pareto critical set of the resulting objective vector, an extension to Pareto optimality would be of significant interest, in particular in applications. As sufficient optimality conditions for MOPs are using second order derivatives (cf.~\cite{M1998}), a possible way to control the optimality of the data set might by to incorporate the hessians of the basis functions in our approach.
		\item For the generation of surrogate models, it is important to ensure that the (extended) Pareto critical set of the surrogate model is indeed a good approximation of the actual (extended) Pareto critical set. The convergence result in Theorem \ref{theorem:convergence} states that the smallest singular value of $\L$ is an upper bound for the Euclidean norm of the KKT conditions in the data points. However, this can not be used directly to obtain an estimate for the Hausdorff distance between the Pareto critical set and its surrogate approximation, which is why further investigation of the convergence theory is required.
		\item In order to improve the robustness, it is advisable to develop automated procedures for selecting $\overline{s}$ as well as $c$ (i.e., the threshold for the singular values and the coefficients for the basis).
	\end{itemize}
		
	\noindent
	\textbf{Acknowledgements:} This research was funded by the DFG Priority Programme 1962 ``Non-smooth and Complementarity-based Distributed Parameter Systems''.
		
	\bibliography{literature}   
	\bibliographystyle{abbrv}
\end{document}